\newtheorem{theorem}{Theorem}
\newtheorem{lemma}{Lemma}
\newtheorem{proposition}{Proposition}
\newtheorem{definition}{Definition}
\newtheorem{remark}{Remark}
\newtheorem{assumption}{Assumption}
\newcommand{\be}{\begin{equation}}
\newcommand{\ee}{\end{equation}}
\newcommand{\beq}{\begin{eqnarray}}
\newcommand{\eeq}{\end{eqnarray}}
\newcommand{\ced}{\end{proof}}
\title{Lattice Approximations of Semilinear Stochastic Elliptic Equations with Reflection\footnotemark[1]}
\author{Jun Dai \footnotemark[2] \and Jing Zhang\footnotemark[3]}
\begin{document}
\maketitle
\begin{abstract}
We study lattice approximations of reflected stochastic elliptic equations driven by white noise on a bounded domain in $\mathbb{R}^d,\ d=1,2,3$. The convergence of the scheme is established.
\end{abstract}

{\bf Key Words. }  stochastic partial differential equations, obstacle problem, white noise, lattice approximation

\footnotetext[2]{School of Mathematical Sciences, Fudan University, Shanghai 200433, China (\textit{e-mail: 13110180052@fudan.edu.cn}).}
\footnotetext[3]{School of Mathematical Sciences, Fudan University, Shanghai 200433, China (\textit{e-mail: zhang\_jing@fudan.edu.cn}).}
\footnotetext[1]{The work of the second author is supported by National Natural Science Foundation of China (11401108) and Shanghai Science and Technology Commission Grant (14PJ1401500).}

\section{Introduction}
Considering the following reflected stochastic partial differential equation (SPDE for short) of elliptic type with Dirichlet boundary condition:
\begin{equation}\label{eq:Ell1}
 \left\{
  \begin{aligned}
    &-\Delta u(x)=f(x,u(x))+\sigma(x,u(x))\dot{W}(x)+\eta(x),\quad x\in D;\\
    &u(x)=0,\quad x\in\partial D;\quad u(x)\geq 0,\quad x\in D,
  \end{aligned}
 \right.
\end{equation}
where $D:=(0,1)^d$, and $\{\dot{W}(x), x\in \overline{D}\}$ is a white noise on $\overline{D}:=[0,1]^d$. The coefficients $f$ and $\sigma$ are measurable mappings from $[0,1]^d\times\mathbb{R}$ into $\mathbb{R}$, and $\eta(x)$ is a random measure which is a part of the solution $(u,\eta)$ and plays a similar role as local time that prevents the solution $u$ from being negative. In this article, we assume the dimension $d=1,2,3$.

Nualart and Tindel \cite{NUA} studied the elliptic SPDE with an additive noise reflected at zero, when $\sigma(\cdot)=1$ and $D$ is a bounded domain of $\mathbb{R}^d,d=1,2,3$. The stochastic elliptic equation was transformed into a deterministic obstacle problem with irregular boundary, of the type studied by Bensoussan and Lions \cite{BEN} and the references therein. Since the boundary function of the translated problem was not smooth enough, their strong existence and uniqueness result was not covered by the deterministic theory of variational inequalities. Yue and Zhang \cite{WEN} studied the elliptic SPDEs with two reflecting walls driven by multiplicative noise, which extended the results of \cite{NUA}. For the Physical background, the dynamics of the location of the interface near two hard walls is determined by the SPDEs with two reflecting walls, see Funaki and Olla \cite{FUN}.

The discretization scheme for SPDEs of elliptic type was discussed by Gy\"{o}ngy and Martinez \cite{GYO}, when $D=(0,1)^d, d=1,2,3$. For these ranges of dimensions, they introduced a numerical scheme based on the discretization of the Laplacian and gave the rate of convergence in $L^2(D)$-norm and in $L^\infty (D)$-norm. Actually, the lattice scheme in \cite{GYO} is related to truncated Fourier expansions. Later Martinez and Sanz-Sol\`{e} in \cite{MAR} studied a lattice approximation for the elliptic SPDE driven by a coloured noise for $d\geq 4$ extending the results of \cite{GYO}. Actually, for $d\geq 4$, the Green function $G_D(x,\cdot)\in L^\alpha(D),\ \alpha\in [1,d/(d-2))$, uniformly in $x$. In particular, the stochastic integral $\int_D G_D(x,y)dW(y)$ with respect to a white noise cannot be defined as a real-valued $L^2$ random variable. This problem forced the choice of a colored noise instead of a white noise to give a rigorous meaning to \eqref{eq:Ell1} for $d\geq 4$.

For the parabolic SPDEs with reflection, Nualart and Pardoux in \cite{NUA1} studied a nonlinear heat equation on the spatial interval $[0,1]$ with Dirichlet boundary conditions, driven by an additive space-time white noise. Donati-Martin and Pardoux in \cite{DON} generalized the model of \cite{NUA1}. The nonlinearity appears both in the drift and in the diffusion coefficients. They prove the existence of the solution by penalization method but they didn't obtain the uniqueness result. And then in 2009, Xu and Zhang solved the problem of the uniqueness, see \cite{Xu}. Recently, Denis, Matoussi and Zhang \cite{DEN} studied a more generalized model with the term of divergence and the coefficients depending on the gradient of the solution. Their method is based on analytical technics coming from the parabolic potential theory.

The discretization scheme for stochastic heat equations driven by space-time white noise was first introduced by Gy\"{o}ngy in \cite{GYO1} and \cite{GYO2}. Zhang \cite{ZHA} introduced a discretization scheme for reflected stochastic partial differential equations driven by space-time white noise through systems of reflecting stochastic differential equations. He studied the existence and uniqueness of solutions of  Skorohod-type deterministic systems on time-dependent domains to establish the convergence of the scheme. He also established the convergence of an approximation scheme for deterministic parabolic obstacle problems.

The purpose of this paper is to develop a numerical scheme for the reflected SPDEs of elliptic type, extending the results in \cite{ZHA}. As in \cite{ZHA}, part of the difficulties is caused by the discretization of the random measure $\eta$ appeared in the equation \eqref{eq:Ell1}. Besides, part of the difficulties lies in the higher dimension of space. We follow the classical method mentioned by \cite{NUA} and transform the reflected stochastic elliptic equations into a SPDE without reflection and a deterministic elliptic obstacle problem. To prove the convergence of the scheme, we need to establish the existence, uniqueness, continuous dependence with respect to barriers and convergence of a discretization scheme of deterministic elliptic obstacle problems. Finally, with the help of the $L^2$-norm estimates about the Green functions of the original problem and its finite difference approximation (see \cite{GYO}), we get our convergence results.

The organization of the paper is as follows. In Section 2, we lay down the framework. Then we introduce the discretization scheme and the main result in Section 3. In Section 4, we establish the existence, uniqueness, continuous dependence with respect to barriers and convergence of a discretization scheme of deterministic elliptic obstacle problems. The Section 5 is devoted to the proof of the existence, uniqueness and convergence of the discretization scheme for elliptic SPDEs with reflection. We relate the elliptic SPDEs with reflection to a random elliptic obstacle problem and obtain the convergence of the scheme by carefully comparing it with the discretization scheme introduced in Section 3.

\section{Framework}
Let $D:=(0,1)^d$. Let $(\Omega, \mathcal{F}, P)$ be a complete probability space carrying a Brownian sheet $W=\{W(x): x\in \overline{D}\}$, which is a continuous Gaussian random field on $\overline{D}:=[0,1]^d$, satisfying: $EW(x)=0$ and $EW(x)W(y)=x\wedge y=\prod_{j=1}^{d}(x_j\wedge y_j)$ for $x,y\in \overline{D}$.

In \eqref{eq:Ell1}, $\dot{W}(x)$ is the formal derivative of $W$ with respect to the Lebesgue measure and the symbol $\Delta$ denotes the Laplace operator on $L^2(D)$. If $u(x)$ hits 0, additional forces are added in order to prevent $u$ from leaving 0. Such an effect will be expressed by adding extra (unknown) term $\eta$ to \eqref{eq:Ell1}.

Denote by $\langle\cdot,\cdot\rangle$ the dot product of Euclidean space, by $(\cdot,\cdot)$ the scalar product of $L^2(D)$, and by $||\cdot||_\infty$ the supremum norm on $D$. The symbols $\alpha=(\alpha_1,\cdots,\alpha_d)$ and $i=(i_1,\cdots,i_d)$ denote indices from the sets of multi-indices
\begin{equation*}
  I^d:=\{1,2,\cdots\}^d,\quad I_n^d:=\{1,\cdots,n-1\}^d,
\end{equation*}
respectively.

The test function space $C_0^\infty(D)$ is the set of infinitely differentiable functions on $D$ with compact supports.

We assume that $d=1,2$ or $3$. Let $f,\sigma: D\times \mathbb{R}\to\mathbb{R}$ be measurable functions satisfying:
\begin{assumption}\label{ass:a1}
 There exists a Lipschitz constant $L_1$, such that for any $x,y\in D$, $u,v\in\mathbb{R}$,
 \begin{displaymath}
  |f(x,u)-f(y,v)|+|\sigma(x,u)-\sigma(y,v)|\leq L_1[|x-y|+|u-v|].
 \end{displaymath}
\end{assumption}
\begin{assumption}\label{ass:a2}
 There exists a constant $L_2$, such that $|f(0,0)|+|\sigma(0,0)|\leq L_2$.
\end{assumption}
\begin{assumption}\label{ass:a3}
 The function $f$ is locally bounded, continuous and non-decreasing as a function of second variable.
\end{assumption}

The solution for the obstacle problem (\ref{eq:Ell1}) will be a pair $(u,\eta)$ such that $u(x)\geq 0$ on $D$ which satisfies (\ref{eq:Ell1}) in the sense of distribution, and $\eta(dx)$ is a random measure on $D$ which forces the process $u$ to be nonnegative. The rigorous definition of the solution for (\ref{eq:Ell1}) is taken from \cite{WEN} as follows:
\begin{definition}
 A pair $(u,\eta)$ is said to be a solution of problem (\ref{eq:Ell1}) if \\
(i) $u$ is a continuous random field on $D$ satisfying $u(x)\geq 0$ and $u|_{\partial D}$=0 a.s.;\\
(ii) $\eta(dx)$ is a random measure on $D$ such that $\eta(K)<+\infty$ for any compact subset $K\subset D$;\\
(iii) for all $\phi\in C_0^\infty(D)$, the following relation holds
\begin{equation}\label{eq:def1}
 -(u,\Delta\phi)=(f(u),\phi)+\int_D \phi(x)\sigma(x, u(x))W(dx)+\int_D \phi(x)\eta(dx), \quad P-a.s.;
\end{equation}
(iv) $\int_D u(x)\eta(dx)=0$.
\end{definition}
\begin{remark}
(i) We need to explain why we can use the existence and uniqueness results from Theorem 4.1 in \cite{WEN}. In fact, our assumptions \ref{ass:a1}-\ref{ass:a3} are stronger than those mentioned in \cite{WEN}; Besides, the condition (4.2) in \cite{WEN} is covered by our
condition \eqref{eq:ourcondition} in Theorem \ref{thm:main}.

(ii) For better understanding the random measure $\eta$, we briefly discuss the construction of it. We denote
\begin{displaymath}
 \eta_\epsilon(dx):=\frac{1}{\epsilon}(z^\epsilon (x)+v(x))^- dx,
\end{displaymath}
where $z^\epsilon\in L^2(D)\cap C(\overline{D})$ is the unique solution of the following equation
\begin{equation*}
 \left\{
  \begin{aligned}
& -\Delta z^\epsilon (x)=\frac{1}{\epsilon}(z^\epsilon+v)^-(x),\quad x\in D,\\
&\, z^\epsilon|_{\partial D}=0.
\end{aligned}
\right.
\end{equation*}
From the proof of Theorem 2.2 in \cite{NUA}, we know that $\eta_\epsilon$ converges in distributional sense to a distribution $\eta$ on $D$ as $\epsilon$ tends to zero,
i.e. for any $\phi\in C_0^\infty (D)$,
\begin{equation*}
 \lim\limits_{\epsilon\to 0}\int_D \phi(x)\eta_\epsilon(dx)=\int_D \phi(x)\eta(dx).
\end{equation*}
$\eta$ is a positive distribution and hence a measure on $D$.
\end{remark}

\section{The discretization scheme and the main result}\label{sec:main}
For $n\in\mathbb{N}^*$, $h:=1/n$, set
\begin{eqnarray*}
  & &D_n:=\{hi:\ i=(i_1,i_2,\cdots,i_d)\in I_n^d\},\\
  & &\overline{D}_n:=\{hj: j=0,1,\cdots,n\}^d,\quad \partial D_n:=\overline{D}_n\backslash D_n.
\end{eqnarray*}
Let $\delta_j^+$, $\delta_j^-$ be "discrete derivation" operators acting on functions $\psi$ on $D_n$ as follows:
\begin{eqnarray*}
 &&\delta_j^+\psi(x):=n(\psi(x+h e_j)-\psi(x)),\quad \delta_j^-\psi(x):=n(\psi(x)-\psi(x-h e_j)),\\
 &&\Delta_n\psi(x):=\sum_{j=1}^d \delta_j^+\delta_j^-\psi(x)=\sum_{j=1}^dn^2(\psi(x+he_j)-2\psi(x)+\psi(x-he_j)),
\end{eqnarray*}
for $x\in D_n$, where $\{e_j\}_{j=1}^d$ is the standard basis of $\mathbb{R}^d$, and $\psi(x):=0$ for $x\notin D_n$. Define the functions:
\begin{eqnarray*}
 &&k_n(t):=\frac{j}{n},\quad k_n^+(t):=\frac{j+1}{n} \quad \text{for}\quad t\in[\frac{j}{n},\frac{j+1}{n}),\ j=0,\pm 1,\pm 2,\cdots,\\
 &&\underline{k}_n(x):=(k_n(x_1),\cdots,k_n(x_d)),\quad x=(x_1,\cdots,x_d)\in\mathbb{R}^d.
\end{eqnarray*}
Let $(u^n,\eta^n)$ be the solution of the system of reflected stochastic equations:
\begin{equation}\label{eq:Ell2}
 \left\{
  \begin{aligned}
    &-\Delta_n u^n(x)=f(x,u^n(x))+\sigma(x,u^n(x))\delta_1^+\cdots\delta_d^+W(x)+\eta^n(x),\quad x\in D_n;\\[6pt]
    &u^n|_{\partial D_n}=0\,;\quad u^n(x)\geq 0, \quad x\in \overline{D}_n\,;\quad \sum\limits_{x\in D_n} u^n(x)\eta^n(x)=0.
 \end{aligned}
 \right.
\end{equation}
 Now we introduce a ordering method, called the natural ordering (see \cite[page 298]{GRE}):

When we say that the sequence of all $(n-1)^d$ points on the lattice $D_n$ are arranged by a natural ordering, it means that the $k$-th component of this sequence is $\frac{i}{n}$, where $i=(i_1,\cdots,i_d)\in I_n^d$ and satisfying
\begin{equation}\label{eq:simpleranking}
k=i_1+(n-1)(i_2-1)+\cdots+(n-1)^{d-1}(i_d-1).
\end{equation}

Set the sequence $(x_1,\cdots,x_{(n-1)^d})$, where $\{x_i\}_{1\leq i\leq (n-1)^d}$ are all points in $D_n$, is arranged by the simple ranking. Set
\begin{eqnarray*}
 \textbf{u}^n&:=&(u^n(x_1),\cdots,u^n(x_{(n-1)^d})),\\
 \bar{\eta}^n&:=&(\eta^n(x_1),\cdots,\eta^n(x_{(n-1)^d})),\\
 f^n(\textbf{u}^n)&:=&(f(x_1,u^n(x_1)),\cdots,f(x_{(n-1)^d},u^n(x_{(n-1)^d}))),\\
 \sigma^n(\textbf{u}^n)\Delta W^n&:=&(\sigma(x_1,u^n(x_1))\delta_1^+\cdots\delta_d^+W(x_1),\cdots,\\
                   & &\sigma(x_{(n-1)^d},u^n(x_{(n-1)^d})\delta_1^+\cdots\delta_d^+W(x_{(n-1)^d}))).
\end{eqnarray*}

The system (\ref{eq:Ell2}) can be regarded as a $(n-1)^d$-dimensional matrix equation written as
\begin{equation}\label{eq:Ell3}
  -n^2A^n \textbf{u}^n=f^n(\textbf{u}^n)+n^d\sigma^n(\textbf{u}^n) \Delta W^n+\bar{\eta}^n,
\end{equation}
where it is easy to see that $A^n:=A^{n,d}$ is a symmetric linear operator on $\mathbb{R}^{(n-1)^d}$. In particular, when $d=1$, let $A^{n,1}=(A_{ki}^{n,1})$ denote the $(n-1)\times(n-1)$ matrix with elements $A_{kk}^{n,1}=-2$, $A_{ki}^{n,1}=1$ for $|k-i|=1$, $A_{ki}^{n,1}=0$ for $|k-i|>1$. When $d=2,3$, we can also find a $-A^n$ as a positive definite matrix.

 If we set $B^n:=-n^2A^n$ and denote $B^n$ as $B$ for simplicity,
 then
 \begin{equation*}
  \textbf{u}^n=B^{-1}f^n(\textbf{u}^n)+n^d B^{-1}\sigma^n(\textbf{u}^n)\Delta W^n+B^{-1}\bar{\eta}^n.
 \end{equation*}
For $\alpha\in I_n^d$, we define
\begin{equation*}\begin{split}
 & \varphi_\alpha(x)=\varphi_{\alpha_1}(x_1)\cdots\varphi_{\alpha_d}(x_d),\quad \mbox{with}\ \varphi_j(t):=\sqrt{2}\sin(j\pi t), \quad t\in\mathbb{R},\ j=1,2,\cdots;\\
 & \varphi^n_{\alpha_i}(x_i):=\varphi_{\alpha_i}(k_n(x_i))+n[\varphi_{\alpha_i}(k_n^+(x_i))-\varphi_{\alpha_i}(k_n(x_i))](x_i-k_n(x_i)),\quad 1\leq i\leq d;\\
 & \varphi^n_\alpha(x):=\varphi^n_{\alpha_1}(x_1)\cdots\varphi^n_{\alpha_d}(x_d).
\end{split}\end{equation*}
Then we have
 \begin{equation*}
  B^{-1}x=\sum_{\alpha\in I_n^d}\lambda_\alpha^n\langle x,b_\alpha\rangle b_\alpha, \quad x\in\mathbb{R}^{(n-1)^d},
 \end{equation*}
where
\begin{equation*}
 b_\alpha=\left(\frac{1}{n}\right)^{\frac{d}{2}}(\varphi_\alpha(x))_{x\in D_n},\quad \alpha\in I_n^d,
\end{equation*}
 forms an orthonormal basis of $\mathbb{R}^{(n-1)^d}$. Moreover, $b_\alpha$, $\alpha\in I_n^d$ are eigenvectors of $n^2A^n$ with eigenvalues $\lambda_\alpha^n:=\pi^2(\alpha_1^2c_{\alpha_1}+\cdots+\alpha_d^2c_{\alpha_d})$, where $c_j^n:=\sin^2(\frac{j\pi}{2n})(\frac{j\pi}{2n})^{-2}$,
satisfying $\frac{4}{\pi^2}\leq c_j^n\leq 1$.

For $x,y\in[0,1]^d$, denote
\begin{eqnarray}
K_n(x,y)&:=&\sum_{\alpha\in I_n^d}\frac{1}{\lambda_\alpha}\varphi_\alpha(\underline{k}_n(x))\varphi_\alpha(\underline{k}_n(y)),\label{eq:K_ndefinition}\\
K^n(x,y)&:=&\sum\limits_{\alpha\in I_n^d}\frac{1}{\lambda_\alpha}\varphi^n_\alpha(x)\varphi_\alpha(\underline{k}_n(y)),\label{eq:K^ndefinition}\\
K(x,y)&:=&\sum_{\alpha\in I^d} \frac{1}{\pi^2|\alpha|^2}\varphi_\alpha(x)\varphi_\alpha(y),\label{eq:Kdefinition}\\
K'(x,y)&:=&\sum_{\alpha\in I^d} \frac{1}{\pi^2|\alpha|^2}\varphi^n_\alpha(x)\varphi_\alpha(y),\label{eq:K'definition}
\end{eqnarray}
where $K(x,y)$ is the Green function on $D$ associated to the Laplacian operator with Dirichlet boundary conditions, $K_n(x,y)$ is the discretized Green function in finite difference scheme, $K'(x,y)$ and $K^n(x,y)$ are continuous approximations of $K(x,y)$ and $K_n(x,y)$ by linear interpolation, respectively.

The following lemma is concerned with the properties of all four Green functions which can be proved directly or indirectly from \cite[Lemma 3.2, 3.3, 3.4]{GYO}.
\begin{lemma}
 Denote~$G=K, K', K^n, K_n$. There exists a constant $C_1$ depending only on the dimension such that
 \begin{equation}\label{eq:Kestimate}
   \int_D|G(x,y)|^2dy\leq C_1,\quad x\in D.
 \end{equation}
 For any $\epsilon>0$, there exists a constant $B$ such that
 \begin{equation}\label{eq:Klipschitz}
   \int_D|G(x,y)-G(z,y)|^2dy\leq B|x-z|^{4\gamma(d,\epsilon)},\quad x,z\in D,
 \end{equation}
where
\begin{equation}\label{eq:gamma}
\gamma(d,\epsilon):=\begin{cases}
\frac{1}{2}, & \text{if}\ d=1;\\
\frac{1}{2}-\epsilon, &\text{if}\ d=2;\\
\frac{1}{4}-\epsilon, &\text{if}\ d=3.
\end{cases}
\end{equation}
\end{lemma}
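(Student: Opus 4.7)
The plan is to exploit the sine-basis spectral representations of all four kernels and reduce both estimates to summability properties of the resulting series. The key structural facts I will use are orthonormality of $\{\varphi_\alpha\}_{\alpha\in I^d}$ in $L^2(D)$, orthonormality of the discrete family $\{b_\alpha\}_{\alpha\in I_n^d}$ in $\mathbb{R}^{(n-1)^d}$ (which gives a discrete Parseval identity $h^d\sum_{j\in I_n^d}\varphi_\alpha(hj)\varphi_\beta(hj)=\delta_{\alpha\beta}$), the eigenvalue bound $\lambda_\alpha^n\geq 4|\alpha|^2$ coming from $c_j^n\geq 4/\pi^2$, and the elementary inequality $|\sin a-\sin b|\leq\min(2,|a-b|)$ with its interpolation.

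For the first bound \eqref{eq:Kestimate}, Parseval applied to $K(x,\cdot)$ yields
\[
\int_D K(x,y)^2\,dy=\sum_{\alpha\in I^d}\frac{\varphi_\alpha(x)^2}{\pi^4|\alpha|^4}\leq 2^d\sum_{\alpha\in I^d}\frac{1}{\pi^4|\alpha|^4},
\]
and the last series converges precisely when $d\leq 3$. The analogous computation for $K_n$ substitutes the discrete Parseval identity above, expressing $\int_D K_n(x,y)^2\,dy$ through the piecewise-constant structure as $\sum_\alpha(\lambda_\alpha^n)^{-2}\varphi_\alpha(\underline{k}_n(x))^2$, then the eigenvalue lower bound recovers the same threshold. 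For $K'$ and $K^n$, I would exploit that $\varphi_\alpha^n$ is the linear interpolant of $\varphi_\alpha$ at lattice points and hence $\|\varphi_\alpha^n\|_\infty\leq 2^{d/2}$; since the integration variable $y$ still sees $\varphi_\alpha(y)$ or $\varphi_\alpha(\underline{k}_n(y))$, the $y$-integration again kills the cross-terms and the bounds reduce to the previous two cases with essentially the same constants.

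For the Hölder estimate \eqref{eq:Klipschitz}, after Parseval it suffices to control
\[
\sum_{\alpha\in I^d}\frac{\bigl(\varphi_\alpha(x)-\varphi_\alpha(z)\bigr)^{2}}{|\alpha|^{4}}.
\]
Interpolating between $|\sin a-\sin b|\leq|a-b|$ and $|\sin a-\sin b|\leq 2$ gives $|\sin a-\sin b|^2\leq 4^{1-2\gamma}|a-b|^{4\gamma}$ for any $\gamma\in(0,1/2]$; expanding the product $\varphi_\alpha=\prod_j\varphi_{\alpha_j}$ by a telescoping sum and using $|\varphi_{\alpha_i}|\leq\sqrt2$ on the remaining factors produces $(\varphi_\alpha(x)-\varphi_\alpha(z))^2\leq C_d|\alpha|^{4\gamma}|x-z|^{4\gamma}$. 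The residual series $\sum_\alpha|\alpha|^{4\gamma-4}$ converges iff $4-4\gamma>d$, forcing $\gamma=1/2$ for $d=1$, $\gamma<1/2$ for $d=2$, $\gamma<1/4$ for $d=3$ — exactly \eqref{eq:gamma}. For $K_n$ the same argument applies with $\underline{k}_n$ (which is 1-Lipschitz) inserted and $\lambda_\alpha^n\geq 4|\alpha|^2$ replacing $\pi^2|\alpha|^2$; for $K'$ and $K^n$ I would use the piecewise-linear derivative bound $|\varphi_\alpha^n(x)-\varphi_\alpha^n(z)|\leq C|\alpha|\,|x-z|$ together with $\|\varphi_\alpha^n\|_\infty\leq 2^{d/2}$ to obtain the same interpolation inequality.

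The main obstacle will be bookkeeping: ensuring that the constants in the discrete Parseval identity, the interpolation inequalities for $\varphi_\alpha^n$, and the eigenvalue comparison $\lambda_\alpha^n$ versus $\pi^2|\alpha|^2$ all stay uniform in $n$ and $x$, so that a single constant $C_1$ and a single exponent $\gamma(d,\epsilon)$ work across all four kernels. Since these discretization-error bookkeeping steps for $K$ and $K_n$ are precisely what \cite{GYO} carries out in its Lemmas 3.2--3.4, I would translate those estimates into the present notation rather than rederive them, with the only genuine additional check being that the linear interpolation to $K'$ and $K^n$ does not enlarge the constants — which it does not, by the uniform sup-bound on $\varphi_\alpha^n$.
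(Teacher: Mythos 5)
The paper does not give its own proof of this lemma; it simply says the estimates ``can be proved directly or indirectly from'' Lemmas 3.2--3.4 of Gy\"ongy--Martinez \cite{GYO}. Your sketch fills in the substance of those lemmas, and the overall strategy --- spectral representation, continuous and discrete Parseval, eigenvalue bound $\lambda_\alpha^n\geq 4|\alpha|^2$, and interpolation of $|\sin a-\sin b|\leq\min(2,|a-b|)$ to get the H\"older exponent $4\gamma(d,\epsilon)$ --- is indeed what \cite{GYO} does. The $L^2$-boundedness argument \eqref{eq:Kestimate} and the H\"older argument \eqref{eq:Klipschitz} for $K$, $K'$ and $K^n$ are correct as sketched: in the last two cases the $x$-dependence enters through $\varphi_\alpha^n$, whose piecewise-linear slope is bounded by $\sqrt{2}\,\alpha_i\pi$ and whose sup-norm is $\leq 2^{d/2}$, so your interpolation inequality carries over.

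There is, however, one concrete error: the claim that $\underline{k}_n$ ``is 1-Lipschitz.'' It is not; $k_n$ is a step function, and $|\underline{k}_n(x)-\underline{k}_n(z)|$ can equal $1/n$ when $x$ and $z$ straddle a grid hyperplane and $|x-z|$ is arbitrarily small. Consequently the chain $\,|\varphi_\alpha(\underline{k}_n(x))-\varphi_\alpha(\underline{k}_n(z))|\leq C|\alpha|^{2\gamma}|\underline{k}_n(x)-\underline{k}_n(z)|^{2\gamma}\leq C|\alpha|^{2\gamma}|x-z|^{2\gamma}$ breaks at the last inequality, and indeed for $G=K_n$ the estimate \eqref{eq:Klipschitz} with an $n$-uniform $B$ fails for $x,z\in D$ in adjacent cells as $|x-z|\to 0$ (e.g.\ $d=1$: the left-hand side is of order $n^{-2}$ while the right-hand side is $B|x-z|^{2}\to 0$). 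The correct statement for $K_n$ is only at lattice points $x,z\in D_n$, where $\underline{k}_n$ is the identity and your argument goes through verbatim; that restricted version is also the only one the paper actually uses (the stochastic integral $\int_D K_n(x_k,y)\,\cdot\,dW(y)$ is evaluated at lattice points $x_k$, and the Kolmogorov/BDG bound in Lemma \ref{le:kolmogorov} is then applied to the continuous kernels $K^n,K'$). So your proposal needs to replace ``$\underline{k}_n$ is 1-Lipschitz'' with ``restrict $x,z$ to $D_n$'' for the $K_n$ case; everything else stands.
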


Applying \eqref{eq:Ell3} and \eqref{eq:K_ndefinition}, we have
\begin{eqnarray}
 u^n(x)&=&\int_D K_n(x,y)f(\underline{k}_n(y),u^n(\underline{k}_n(y)))dy+\int_D K_n(x,y)\sigma(\underline{k}_n(y),u^n(\underline{k}_n(y)))dW(y)\nonumber\\
 &&+\int_D K_n(x,y)\eta^n(\underline{k}_n(y))dy, \quad x\in D_n.\label{eq:u^n}
\end{eqnarray}

 The following lemma is concerned about the existence and uniqueness of the discretization scheme:
\begin{lemma}\label{le:thm0}
Suppose Assumptions \ref{ass:a1} and \ref{ass:a2} hold with $L_1$ satisfying $\exists p>d/(2\gamma(d,\epsilon)),$
\begin{equation}\label{eq:con_in_lemma}
 2^{2p-1}L_1^p C_D^{p\over 2}+2^{3p-2}c_p L_1^p (aB^{p\over 2}+C_D^{p\over 2})<1,
\end{equation}
where $c_p$ and $a$ are universal constants appeared in the BDG inequality and Komogorov's inequality, $B$ is the constant appeared in the estimate of the Green function $K_n$ in \eqref{eq:Klipschitz}, and $C_D:=\sup\limits_{x\in D}\int_D |K_n(x,y)|^2dy$.
Then there exists a unique solution $(\textbf{u}^n,\bar{\eta}^n)$ to the equation \eqref{eq:Ell3}.
\end{lemma}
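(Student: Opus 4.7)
My plan is to run a Picard/Banach fixed-point iteration in the space
$\mathcal X := L^p(\Omega; C(\overline D))$
of continuous random fields vanishing on $\partial D$, equipped with the norm
$\|v\|_{\mathcal X} := (E\|v\|_\infty^p)^{1/p}$,
and to match the contraction constant exactly with hypothesis~\eqref{eq:con_in_lemma}. Define the map $\Phi : \mathcal X \to \mathcal X$ by letting $\Phi(v)$ be the (continuous extension of the) solution of the discrete obstacle problem~\eqref{eq:Ell3} whose forcing is frozen at $v$, so that $\Phi(v)(x) = Z_v(x) + R(x)$ with
\begin{equation*}
Z_v(x) := \int_D K_n(x,y) f(\underline{k}_n(y), v(\underline{k}_n(y)))\,dy + \int_D K_n(x,y) \sigma(\underline{k}_n(y), v(\underline{k}_n(y)))\,dW(y),
\end{equation*}
and $R(x) := \int_D K_n(x,y) \eta(\underline{k}_n(y))\,dy \ge 0$. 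For each fixed $v(\omega)$, the finite-dimensional linear complementarity problem determining $(\Phi(v)|_{D_n}, \eta|_{D_n})$ is uniquely solvable $\omega$-wise because $-n^2 A^n$ is symmetric positive definite, so $\Phi$ is well defined and measurable.

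The first key step is a Skorohod-type comparison for the lattice obstacle problem. For $v_1, v_2 \in \mathcal X$ with $\tilde u_i := \Phi(v_i)$ and reflection terms $\eta_i \ge 0$, the complementarity identities $\tilde u_i(x)\eta_i(x) = 0$ for each $x \in D_n$ yield
\begin{equation*}
\sum_{x \in D_n} (\tilde u_1 - \tilde u_2)(x)(\eta_1 - \eta_2)(x) = -\sum_{x \in D_n}(\tilde u_1 \eta_2 + \tilde u_2 \eta_1)(x) \le 0,
\end{equation*}
and a standard obstacle-comparison argument (testing the difference equation against suitable cut-offs of $\tilde u_1 - \tilde u_2$) then produces the pathwise Skorohod-type bound $\|\tilde u_1 - \tilde u_2\|_\infty \le 2\|Z_{v_1} - Z_{v_2}\|_\infty$, i.e.\ the reflection amplifies differences by at most a universal factor.

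The second step quantifies $\|Z_{v_1} - Z_{v_2}\|_\infty$ in $L^p(\Omega)$. Writing $Z_{v_1} - Z_{v_2} = I_1 + I_2$ for the drift and stochastic parts, Cauchy--Schwarz together with~\eqref{eq:Kestimate} gives the pathwise bound $\|I_1\|_\infty \le L_1 C_D^{1/2}\|v_1 - v_2\|_\infty$, hence $E\|I_1\|_\infty^p \le L_1^p C_D^{p/2}\|v_1 - v_2\|_{\mathcal X}^p$. For $I_2$, the BDG inequality (constant $c_p$) combined with~\eqref{eq:Kestimate} and~\eqref{eq:Klipschitz} yields
\begin{equation*}
E|I_2(x)|^p \le c_p L_1^p C_D^{p/2}\|v_1 - v_2\|_{\mathcal X}^p,\qquad E|I_2(x) - I_2(z)|^p \le c_p L_1^p B^{p/2}|x-z|^{2\gamma(d,\epsilon) p}\|v_1 - v_2\|_{\mathcal X}^p.
\end{equation*}
Since $2\gamma(d,\epsilon) p > d$ by the choice of $p$, Kolmogorov's continuity theorem (constant $a$) upgrades these to $E\|I_2\|_\infty^p \le 2^{p-1} c_p L_1^p (C_D^{p/2} + a B^{p/2})\|v_1 - v_2\|_{\mathcal X}^p$. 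Inserting into the Skorohod bound via $|\tilde u_1 - \tilde u_2|^p \le 2^p|I_1 + I_2|^p \le 2^{2p-1}(|I_1|^p + |I_2|^p)$ produces exactly
\begin{equation*}
\|\Phi(v_1) - \Phi(v_2)\|_{\mathcal X}^p \le \bigl[2^{2p-1} L_1^p C_D^{p/2} + 2^{3p-2} c_p L_1^p (a B^{p/2} + C_D^{p/2})\bigr]\|v_1 - v_2\|_{\mathcal X}^p,
\end{equation*}
which by \eqref{eq:con_in_lemma} is a strict contraction. The Banach fixed-point theorem then supplies a unique $\textbf{u}^n$ and $\bar\eta^n$ is recovered $\omega$-wise from the lattice LCP.

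The main obstacle is the Skorohod-type sup-comparison in the first step: once that factor-of-$2$ pathwise inequality is established for the lattice obstacle problem, the remainder of the argument is a careful bookkeeping of BDG and Kolmogorov constants that has been tuned precisely to the form of~\eqref{eq:con_in_lemma}.
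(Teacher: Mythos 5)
Your proposal is correct and follows essentially the same route as the paper's proof: a Picard/Banach fixed-point argument in $L^p(\Omega;C(\overline D))$, a factor-of-$2$ sup-norm comparison for the lattice obstacle problem, and BDG plus Kolmogorov estimates whose constants reproduce exactly the hypothesis \eqref{eq:con_in_lemma}. The one point worth flagging concerns the step you label the main obstacle, namely the pathwise bound $\|\tilde u_1-\tilde u_2\|_\infty\le 2\|Z_{v_1}-Z_{v_2}\|_\infty$: this is precisely Lemma \ref{le:thm1}, which the paper proves by a discrete maximum-principle argument -- testing $-n^2A^n(Z^{n_1}-Z^{n_2}-M)$ against $(Z^{n_1}-Z^{n_2}-M)^+$ with $M$ the constant vector at level $m:=\sup_k|V^{n_1}_k-V^{n_2}_k|$, and invoking $\langle b^+,A^nb\rangle\le 0$ (Lemma \ref{le:thm2}) together with the diagonal dominance of $A^n$. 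The $L^2$-type monotonicity identity $\sum_{x\in D_n}(\tilde u_1-\tilde u_2)(\eta_1-\eta_2)\le 0$ that you display is true but is not the mechanism that yields a sup-norm estimate, so the portion you call a \emph{standard obstacle-comparison argument} is exactly what still needs to be written out (and is, in the paper, Lemma \ref{le:thm1}).
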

\noindent We postpone the proof of Lemma \ref{le:thm0} to Section 5.\\
For $n\in\mathbb{N}^*$, define a continuous approximation $\tilde u^n$ from the values $u^n$ on $D_n$ by linear interpolation.
If $\psi$ is a function on $\{t_i:=i/n: i=0,1,2,\cdots,n\}$, then extending $\psi$ by linear interpolation onto $[0,1]$ means that we define the function $\psi(t):=\psi(t_i)+n(t-t_i)(\psi(t_{i+1})-\psi(t_i))$ for $t\in [t_i,t_{i+1}),\ i=0,1,\cdots,n-1$. Let $\{u^n(x):x\in D_n\}$ defined by \eqref{eq:u^n}. Define $\tilde u^n(x_1,x_2,\cdots,x_d)$ by linear interpolation, successively in each variable $x_1,x_2,\cdots,x_d$. Then
\begin{equation}\label{eq:tildeu}\begin{split}
 \tilde u^n(x)=&\int_D K^n(x,y)f(\underline{k}_n(y),u^n(\underline{k}_n(y)))dy+\int_D K^n(x,y)\sigma(\underline{k}_n(y),u^n(\underline{k}_n(y)))dW(y)\\
 &+\int_D K^n(x,y)\eta^n(\underline{k}_n(y))dy, \quad x\in D.\end{split}
\end{equation}

The following theorem is the main result of this paper.
\begin{theorem}\label{thm:main}
Suppose Assumptions \ref{ass:a1}, \ref{ass:a2} and \ref{ass:a3} hold with $L_1$ satisfying $\exists p>d/(2\gamma(d,\epsilon)),$
\begin{equation}\label{eq:ourcondition}
 2^{3p-2}L_1^p(\tilde C_D)^{\frac{p}{2}}+2^{4p-3}c_pL_1^p(aB^{p\over 2}+\tilde C_D^{p\over 2})<1,
\end{equation}
where $c_p$ and $a$ are universal constants appeared in the BDG inequality and Komogorov's inequality, $B$ is the constant appeared in the estimate of the Green function $K^n$ in \eqref{eq:Klipschitz}, and $\tilde C_D:=\sup\limits_{x\in D}\int_D |K^n(x,y)|^2dy$. Then we have
\begin{equation}\label{thm}
 \lim\limits_{n\to\infty}E[\sup\limits_{x\in D}|\tilde u^n(x)-u(x)|^p]=0.
\end{equation}
\end{theorem}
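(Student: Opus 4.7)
The plan is to mimic the Nualart--Tindel decomposition at both the continuous and discrete levels. On the continuous side, write $u=z+\bar u$, where $z$ is the (random) solution of the linear elliptic SPDE
\[
-\Delta z(x)=\sigma(x,u(x))\dot W(x),\quad x\in D,\qquad z|_{\partial D}=0,
\]
so that $z(x)=\int_D K(x,y)\sigma(y,u(y))dW(y)$, and $\bar u=u-z$ pathwise solves the \emph{deterministic} elliptic obstacle problem with forcing $f(\cdot,\bar u+z)$, obstacle $-z$ and reflection measure $\eta$. At the discrete level, set $\tilde u^n=\tilde z^n+\tilde w^n$ with $\tilde z^n(x)=\int_D K^n(x,y)\sigma(\underline{k}_n(y),u^n(\underline{k}_n(y)))dW(y)$, so that $\tilde w^n=\tilde u^n-\tilde z^n$ is (after restriction to $D_n$) the solution of the discrete deterministic obstacle problem driven by $f(\cdot,u^n)$ with obstacle $-z^n$, whose continuous-in-$n$ and convergence properties were prepared in Section 4.

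Given this splitting, by the triangle inequality
\[
E\bigl[\sup_{x\in D}|\tilde u^n(x)-u(x)|^p\bigr]\le 2^{p-1}E\bigl[\sup_x|\tilde z^n-z|^p\bigr]+2^{p-1}E\bigl[\sup_x|\tilde w^n-\bar u|^p\bigr],
\]
so it suffices to control each piece. First I would treat the SPDE part $\tilde z^n-z$: writing it as $\int_D(K^n(x,y)-K(x,y))\sigma(y,u(y))dW(y)$ plus $\int_D K^n(x,y)[\sigma(\underline{k}_n(y),u^n(\underline{k}_n(y)))-\sigma(y,u(y))]dW(y)$, I would bound the $L^p$-norm of the supremum via a Kolmogorov-type argument, invoking the BDG inequality together with the uniform Green-function estimates \eqref{eq:Kestimate}--\eqref{eq:Klipschitz}. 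The first term is handled by the $L^2$ convergence $K^n\to K$ (proved in \cite{GYO} via the eigenfunction representation), while the second term produces a contribution bounded by Lipschitz constants times $E[\sup_x|\tilde u^n-u|^p]$ plus a discretization error $E[\sup_x|u(x)-u(\underline k_n(x))|^p]$ that vanishes by uniform Hölder continuity of $u$.

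Next, for the obstacle-problem part $\tilde w^n-\bar u$, I would invoke the continuous dependence of the deterministic elliptic obstacle problem on the obstacle and on the forcing term established in Section 4, together with the convergence of the discretization scheme proved there. Schematically, $|\tilde w^n(x)-\bar u(x)|$ is controlled by a Lipschitz contribution $L_1\sup_y|u^n-u|$ from the forcing $f(\cdot,u^n)$ vs.\ $f(\cdot,\bar u+z)$, plus a contribution $\sup_y|\tilde z^n-z|$ from the obstacles $-\tilde z^n$ vs.\ $-z$, plus an intrinsic deterministic discretization error which tends to zero pathwise by the Section 4 convergence result applied $\omega$-wise, and whose $L^p$-moment is dominated by the standing assumptions.

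Combining these two bounds, one obtains an inequality of the form
\[
E\bigl[\sup_x|\tilde u^n-u|^p\bigr]\le \theta\, E\bigl[\sup_x|\tilde u^n-u|^p\bigr]+\rho_n,
\]
where $\theta<1$ is exactly the constant on the left-hand side of \eqref{eq:ourcondition}, and $\rho_n\to 0$. The main obstacle is precisely closing this contraction: one must carefully separate the self-referential terms arising from $\sigma(\cdot,u^n)-\sigma(\cdot,u)$ in the stochastic integral and from the Lipschitz dependence of the obstacle solution on both its forcing and its barrier, and show that the remaining error terms (Green-function approximation, discretization of $u$ through $\underline k_n$, and the deterministic Section 4 error) are genuinely $o(1)$ in $L^p$; the numerological condition \eqref{eq:ourcondition} is designed so that the combined prefactor in front of $E[\sup_x|\tilde u^n-u|^p]$ is strictly less than one and can be absorbed to conclude.
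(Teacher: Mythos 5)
Your overall strategy — decompose $u$ into an ``explicit'' integral piece plus an obstacle-problem piece, do the same at the discrete level, estimate the explicit pieces via Green-function bounds and the obstacle pieces via the stability theory of Section~4, and close a contraction using \eqref{eq:ourcondition} — is the same architecture as the paper's proof. However, your choice of decomposition does not actually mesh with the tools prepared in Section~4, and this is a genuine gap.

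You place only the stochastic term into the explicit part, i.e.\ $z=\int_D K(\cdot,y)\sigma(y,u(y))\,dW(y)$, so that the remaining $\bar u=u-z$ solves $-\Delta\bar u=f(\cdot,u)+\eta$ with barrier $-z$. But the deterministic obstacle problem studied in Section~4, namely \eqref{eq:Det1}, has \emph{no} source term $f$: Theorem~\ref{thm:conz} and Lemma~\ref{le:thm1} concern only $-\Delta z=\eta$, $z\ge -v$, and the stability estimates (Proposition~\ref{pro:inequality}, Lemma~\ref{le:thm1}) are \emph{with respect to the barrier $v$ only}. Your sketch invokes ``continuous dependence of the deterministic elliptic obstacle problem on the obstacle \emph{and on the forcing term} established in Section~4,'' but no such dependence on a forcing term is established there, because there is no forcing term in \eqref{eq:Det1} or \eqref{eq:Det2}. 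To make your split work you would either have to extend Section~4 to the problem $-\Delta z=g+\eta$ (which is doable by noting that $g$ can be absorbed into a barrier shift $-v-\int K g$, but you do not say this), or — as the paper does — absorb the $\int K\,f(\cdot,u)$ integral directly into the explicit part by defining $\bar v=\int K f+\int K\sigma\,dW$, so that $\bar z=u-\bar v$ solves exactly the pure obstacle problem of Section~4.

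A second, related gap concerns how you compare the discrete obstacle solution $\tilde w^n$ with the continuous $\bar u$. Theorem~\ref{thm:conz} is a convergence statement for a \emph{fixed} continuous barrier $v$: the discrete barrier is the restriction of that one $v$ to the lattice. In your plan, the discrete obstacle problem for $\tilde w^n$ has barrier $-\tilde z^n$ (a genuinely $n$-dependent random field that is not the restriction of $-z$) and forcing $f(\cdot,u^n)$ (also $n$-dependent), so Theorem~\ref{thm:conz} does not apply to it directly. The paper circumvents this by introducing an \emph{intermediate discrete} obstacle problem \eqref{eq:sec5ell2} whose barrier is the exact $\bar v$ sampled on the grid: Theorem~\ref{thm:conz} then gives $\bar z^n\to\bar z$ because the barrier is the fixed $\bar v$, while Lemma~\ref{le:thm1} compares the two discrete problems \eqref{eq:sec5ell1} and \eqref{eq:sec5ell2} (same forcing $\eta$, different barriers), reducing everything to $\sup|\bar v^n-v^n|$, after which the contraction closes. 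Your sketch skips this triangulation, so the reduction to a contraction inequality plus an $o(1)$ remainder $\rho_n$ is asserted rather than obtained. The missing idea is precisely the intermediate problem \eqref{eq:sec5ell2}: without it you cannot separate the barrier discretization error (handled by Theorem~\ref{thm:conz}) from the self-referential error (handled by Lemma~\ref{le:thm1} and the contraction).

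Finally, the constant you would obtain when closing the contraction would not match \eqref{eq:ourcondition} exactly: the paper's count of factors of $2$ (producing $2^{3p-2}$ and $2^{4p-3}$) comes from the chain of triangle inequalities through $\bar u^n$, $\hat v^n$, $\bar v^n$, $v^n$, not from the two-term split you propose. This is bookkeeping rather than conceptual, but it signals that your argument, if completed, would lead to a different sufficient smallness condition on $L_1$.
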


\section{A discretization scheme for deterministic obstacle problem}\label{sec:dis}
Let $v(x)$ be in $C(D)$ with $v|_{\partial D}=0$.
For $n\in\mathbb{N}^*$, $a=(a_1,\cdots,a_n)$, $b=(b_1,\cdots,b_n)\in\mathbb{R}^n$, we write $a\geq b$ if $a_i\geq b_i$ for all $i=1,\cdots, n$.
Consider a deterministic elliptic PDE with obstacle:
\begin{equation}\label{eq:Det1}
 \begin{cases}
   -\Delta z(x)=\eta(x),\quad& x\in D\,;\\
    z(x)\geq -v(x), \quad& x\in D\,;\\
    z|_{\partial D}=0\,.
 \end{cases}
\end{equation}
The rigorous definition of solution to problem (\ref{eq:Det1}) is as follows:
\begin{definition}\label{def:Det1}
 A pair $(z,\eta)$ is called a solution to (\ref{eq:Det1}) if\\
(i) $z$ is a continuous function on $D$ satisfying $z(x)\geq -v(x)$ and $z|_{\partial D}=0$;\\
(ii) $\eta(dx)$ is a measure on $D$ such that $\eta(K)<\infty$ for all compact subset $K\subset D$;\\
(iii) for all $\phi\in C_0^\infty(D)$, we have
\begin{equation*}
 -(z,\Delta\phi)=\int_D\phi(x)\eta(dx);
\end{equation*}
(iv)$\int_D (z(x)+v(x))\eta(dx)=0.$
\end{definition}
The existence and uniqueness of solution to problem (\ref{eq:Det1}) was proved in \cite[Theorem 2.2]{NUA} and \cite[Theorem 3.1]{WEN}, where the later one is concerned with a more general case with two reflecting walls by means of well-known penalization method.

The following result can be derived from \cite[Theorem 2.2]{NUA}.

\begin{proposition}\label{pro:inequality}
 If $z_1$ and $z_2$ are two solutions to (\ref{eq:Det1}) associated with the functions $v_1$ and $v_2$, respectively, then
\begin{equation*}
 \|z_1-z_2\|_\infty\leq \|v_1-v_2\|_\infty.
\end{equation*}
\end{proposition}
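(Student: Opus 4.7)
The plan is to establish the one-sided inequality $z_1 - z_2 \leq M$ with $M := \|v_1 - v_2\|_\infty$ by a maximum-principle argument on the open set where $z_1 - z_2$ exceeds $M$; the reverse inequality follows by interchanging the roles of $z_1$ and $z_2$, yielding the desired sup-norm estimate. The degenerate case $M = 0$ collapses to $v_1 = v_2$, for which the conclusion is the uniqueness part of Theorem 2.2 of \cite{NUA}, so we may assume $M > 0$.

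The first step is to extract from condition (iv) of Definition \ref{def:Det1} the standard support property $\operatorname{supp}(\eta_i) \subseteq \{z_i = -v_i\}$, which holds because $z_i + v_i \geq 0$ together with $\eta_i \geq 0$ and $\int_D (z_i + v_i)\,\eta_i(dx) = 0$ force $z_i + v_i$ to vanish $\eta_i$-a.e. Next, I would set $w := z_1 - z_2 - M$ and $U := \{x \in D : w(x) > 0\}$; the latter is open by continuity of $z_1$ and $z_2$. For any $x \in U$,
\[
z_1(x) \;>\; z_2(x) + M \;\geq\; -v_2(x) + \|v_1 - v_2\|_\infty \;\geq\; -v_1(x),
\]
so $U$ does not meet $\operatorname{supp}(\eta_1)$ and hence $\eta_1(U) = 0$. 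The distributional identity $-\Delta w = \eta_1 - \eta_2$ consequently reduces on $U$ to $-\Delta w = -\eta_2 \leq 0$.

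This says that the continuous function $w$ has nonnegative distributional Laplacian on the open set $U$, hence is subharmonic there in the classical sense. Because $w \equiv -M < 0$ on $\partial D$, continuity forces $\overline{U} \subset D$ and $w = 0$ on $\partial U$. The maximum principle for continuous subharmonic functions then yields $\sup_{\overline{U}} w = \max_{\partial U} w = 0$, contradicting $U = \{w > 0\} \neq \emptyset$. Therefore $U = \emptyset$, i.e. $z_1 - z_2 \leq M$ everywhere in $D$; the symmetric argument supplies $z_2 - z_1 \leq M$, and combining the two bounds completes the proof.

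The main technical point to handle carefully is the passage from the distributional inequality $\Delta w \geq 0$, whose right-hand side is a priori a signed Radon measure, to classical subharmonicity of the continuous representative of $w$ on $U$. This rests on the standard fact that a continuous function whose distributional Laplacian is a nonnegative measure on an open set is subharmonic there in the mean-value sense, after which the pointwise maximum principle applies verbatim and the rest is bookkeeping.
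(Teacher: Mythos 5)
Your argument is correct and self-contained. Since the paper states Proposition~\ref{pro:inequality} only as a consequence of Theorem~2.2 in \cite{NUA} without giving a proof, there is no explicit argument in the text to compare against; what is worth noting is that your maximum-principle proof is precisely the continuous analogue of the discrete energy argument the paper \emph{does} spell out in Lemma~\ref{le:thm1}. There you see the same ingredients in discrete form: the shift by the constant vector $M$, the observation that $\{Z^{n_1}_k - Z^{n_2}_k > m\} \subseteq \{Z^{n_1}_k > -V^{n_1}_k\}$ (your support inclusion for $\eta_1$), the sign control of $\langle \bar\eta^{n_2}, (\,\cdot\,)^+\rangle$, and a discrete maximum principle via Lemma~\ref{le:thm2} in place of subharmonicity. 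Your continuous version handles the two small technical points one should handle: extracting $\operatorname{supp}(\eta_i)\subseteq\{z_i+v_i=0\}$ from condition~(iv) of Definition~\ref{def:Det1} (which uses continuity of $z_i+v_i$, not just nonnegativity), and passing from $\Delta w\ge 0$ as a nonnegative measure on the open set $U$ to classical subharmonicity of the continuous representative. The boundary bookkeeping is also fine: for $M>0$, $w=-M<0$ on $\partial D$ forces $\overline U\subset D$, and $w=0$ on $\partial U$ by continuity, so the weak maximum principle applies on each component of $U$ (using $\partial U_j\subseteq\partial U$ for components $U_j$). The case $M=0$ correctly reduces to uniqueness. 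In short, a valid proof that makes explicit the mechanism the paper invokes by citation, and that mirrors the paper's own discrete proof.
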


We now introduce the discretization scheme for the deterministic obstacle problem (\ref{eq:Det1}). Choose the sequence $x=(x_1,\cdots,x_{(n-1)^d})$ consisting of all points in $D_n$ arranged by the natural ordering. For $n\in\mathbb{N}^*$, define
\begin{equation*}
 V^n:=(V^n_i)_{1\leq i\leq (n-1)^d}=(v(x_i))_{1\leq i\leq (n-1)^d}.
\end{equation*}

Consider the following linear reflected system in $\mathbb{R}^{(n-1)^d}$:
\begin{equation}\label{eq:Det2}
 \left\{\begin{aligned}
    &-n^2A^nZ^n=\bar{\eta}^n,\\
    &Z^n\geq -V^n,\\
    &\langle Z^n+V^n,\bar{\eta}^n\rangle=0,
  \end{aligned}\right.
\end{equation}
with $\bar{\eta}^n|_{\partial D_n}=0$ and $A^n$ defined in \eqref{eq:Ell3}.
We have
\begin{theorem}\label{thm:uniqueofdeter}
System (\ref{eq:Det2}) admits a unique solution $(Z^n,\bar{\eta}^n)$.
\end{theorem}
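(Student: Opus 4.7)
The plan is to recast (\ref{eq:Det2}) as a finite-dimensional obstacle problem and exploit the fact, recalled immediately after (\ref{eq:Ell3}), that $B^n := -n^2 A^n$ is symmetric and positive definite. First I would perform the shift $U^n := Z^n + V^n \in \mathbb{R}^{(n-1)^d}$, which turns the constraint $Z^n \geq -V^n$ into $U^n \geq 0$ (componentwise) and the equation $-n^2 A^n Z^n = \bar{\eta}^n$ into $\bar{\eta}^n = B^n U^n - B^n V^n$. With this substitution, system (\ref{eq:Det2}) (augmented with the natural sign condition $\bar{\eta}^n \geq 0$ inherited from the continuous obstacle problem) becomes the standard linear complementarity problem
\begin{equation*}
 U^n \geq 0,\qquad B^n U^n - B^n V^n \geq 0,\qquad \langle U^n,\, B^n U^n - B^n V^n\rangle = 0.
\end{equation*}

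Next I would study the quadratic functional
\begin{equation*}
 F(U) := \tfrac{1}{2}\langle B^n U, U\rangle - \langle B^n V^n, U\rangle
\end{equation*}
on the closed convex cone $K := \{U \in \mathbb{R}^{(n-1)^d} : U \geq 0\}$. Because $B^n$ is symmetric positive definite, $F$ is continuous, strictly convex and coercive on $\mathbb{R}^{(n-1)^d}$, and $K$ is nonempty, closed and convex, so a unique minimizer $U^n \in K$ exists by the standard direct method (or by the Hilbert-space projection argument applied to the inner product $\langle B^n \cdot,\cdot\rangle$ and the target $V^n$).

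Finally I would check that $Z^n := U^n - V^n$ together with $\bar{\eta}^n := B^n U^n - B^n V^n$ solves (\ref{eq:Det2}), and that this correspondence is a bijection with solutions of the LCP. The first-order variational inequality characterizing the minimizer, $\langle \nabla F(U^n),\, Y - U^n\rangle \geq 0$ for all $Y \in K$, specializes at $Y = U^n + e_i$ to give $\bar{\eta}^n \geq 0$ componentwise, and at $Y = 0$ and $Y = 2U^n$ to give $\langle U^n, \bar{\eta}^n\rangle = 0$, which after undoing the shift is exactly $\langle Z^n + V^n, \bar{\eta}^n\rangle = 0$. Uniqueness of $(Z^n,\bar{\eta}^n)$ is then immediate from the strict convexity of $F$. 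There is no substantive analytic obstacle here since the problem is finite-dimensional and the positive definiteness of $-A^n$ is already in hand; the only care needed is the sign-tracking in the KKT step and the (implicit but essential) inclusion of $\bar{\eta}^n \geq 0$ in the obstacle formulation.
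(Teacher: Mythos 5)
Your proof is correct, and it takes a genuinely different route from the paper's. The paper's proof sets $B=(b_1,\dots,b_n)':=-n^2A^n$, invokes invertibility, and then tries to read off $\bar\eta^n$ componentwise from the sign of $b_i V^n$, asserting $\bar{\eta}^n_i=-b_iV^n$ when $b_iV^n\le 0$ and $\bar\eta^n_i=0$ otherwise. Your approach instead shifts to $U^n:=Z^n+V^n$, recognizes (\ref{eq:Det2}) (with the implicit sign condition $\bar\eta^n\ge 0$) as the standard linear complementarity problem $U^n\ge 0$, $B^nU^n-B^nV^n\ge 0$, $\langle U^n, B^nU^n-B^nV^n\rangle=0$, and obtains existence and uniqueness by minimizing the strictly convex coercive quadratic $F(U)=\tfrac12\langle B^nU,U\rangle-\langle B^nV^n,U\rangle$ over the cone $\{U\ge 0\}$, reading off the complementarity from the first-order variational inequality. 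This is the cleaner and, in fact, the more robust argument: because $B^n$ is not diagonal, the LCP solution is \emph{not} given by the simple sign-of-$b_iV^n$ recipe the paper proposes (for instance with $d=1$, $n=3$, $V^n=(1,0)$, that recipe returns $\bar\eta^n=(0,9)$, whereas the actual solution is $\bar\eta^n=(0,0)$ with $Z^n=(0,0)$). The variational/LCP route you take sidesteps the need to identify the active set explicitly and rests solely on the symmetric positive definiteness of $B^n$, which the paper has already recorded after (\ref{eq:Ell3}). You also correctly flag that $\bar\eta^n\ge 0$ must be part of the formulation of (\ref{eq:Det2}) for uniqueness to hold; the paper uses this too, but only in passing.
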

\begin{proof}
Since $Z^n\geq-V^n$, then $Z_1^n\geq -V_1^n$, $Z_2^n\geq -V_2^n,\cdots, Z_{n-1}^n\geq -V_{n-1}^n$. Moreover, $\langle Z^n+V^n,\bar{\eta}^n\rangle=\sum_{k=1}^{(n-1)^d}(Z_k^n+V_k^n)\bar{\eta}_k^n=0$, while $\bar{\eta}_k^n\geq 0$.\\
If we set $B=(b_1,\cdots,b_n)':=-n^2A^n$, then $B$ is an invertible matrix and $Z^n=B^{-1}\bar{\eta}^n$.
We deduce, $\bar{\eta}^n_i=-b_iv^n$ if $b_iv^n\leq 0$; $\bar{\eta}^n_i=0$ if $b_iv^n>0$.
Hence, \eqref{eq:Det2} has a unique solution $(Z^n, \bar{\eta}^n)$ when $V^n$ is given.
\end{proof}

\begin{lemma}\label{le:thm1}
 If $(Z^{n_i},\bar{\eta}^{n_i})$ is a solution of (\ref{eq:Det2}) with $V^n$ replaced by $V^{n_i}$, for $i=1,2$, then
\begin{equation*}
 \sup\limits_{1\leq k\leq {(n-1)^d}}|Z_k^{n_1}-Z_k^{n_2}|\leq \sup\limits_{1\leq k\leq {(n-1)^d}}|V_k^{n_1}-V_k^{n_2}|.
\end{equation*}
\end{lemma}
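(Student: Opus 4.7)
The inequality is a discrete analogue of Proposition \ref{pro:inequality}, and my plan is to prove it by the discrete maximum principle applied to the difference $W_k := Z_k^{n_1} - Z_k^{n_2}$. Set $M := \sup_k |V_k^{n_1} - V_k^{n_2}|$. By symmetry (swapping the roles of $1$ and $2$), it suffices to show $W_k \le M$ for every $k$; the bound $W_k \ge -M$ then follows by the same argument with $W$ replaced by $-W$.

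First I would introduce the superlevel set
\begin{equation*}
  S := \{k \in I_n^d : W_k > M\},
\end{equation*}
and argue by contradiction, assuming $S \neq \emptyset$. The key step is to show that $\bar\eta^{n_1}_k = 0$ on $S$. For $k \in S$,
\begin{equation*}
  Z_k^{n_1} \;>\; Z_k^{n_2} + M \;\ge\; -V_k^{n_2} + M \;\ge\; -V_k^{n_1},
\end{equation*}
where the second inequality uses the obstacle condition for $Z^{n_2}$ and the third uses $V_k^{n_1} - V_k^{n_2} \le M$. Hence $Z_k^{n_1} + V_k^{n_1} > 0$ on $S$. Since the complementarity relation $\langle Z^{n_1} + V^{n_1}, \bar\eta^{n_1}\rangle = 0$ is a sum of nonnegative terms $(Z_k^{n_1}+V_k^{n_1})\bar\eta_k^{n_1} \ge 0$, each summand must vanish, so $\bar\eta_k^{n_1} = 0$ for $k \in S$.

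Subtracting the two instances of system \eqref{eq:Det2} gives $-n^2 A^n W = \bar\eta^{n_1} - \bar\eta^{n_2}$. On $S$ this reduces to $-n^2 A^n W_k = -\bar\eta_k^{n_2} \le 0$, i.e.\ $\Delta_n W_k \ge 0$ on $S$; in words, $W$ is discretely subharmonic on $S$. I would then invoke the discrete maximum principle: if $k^* \in S$ is a maximizer of $W$ over $\overline{S}$, then $\Delta_n W(k^*) \ge 0$ combined with the fact that $W(k^*) \ge W(k^* \pm h e_j)$ forces equality $W(k^* \pm h e_j) = W(k^*)$ for every $j$. Iterating this along lattice paths — which is legitimate since $\overline{D}_n$ is a finite connected grid — eventually reaches a neighbour in $\partial D_n$ where $W = 0$, or a neighbour in $D_n \setminus S$ where $W \le M$. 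In either case we conclude $W(k^*) \le M$, contradicting $k^* \in S$. Therefore $S = \emptyset$, giving $W_k \le M$ throughout.

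I expect the main technical obstacle to be writing the propagation-to-boundary argument cleanly: one has to track that at a maximum the discrete sub-mean-value inequality forces $W$ to be constant on the connected component of $\{k : W_k = W(k^*)\}$ with respect to the grid adjacency, and that this component must meet $\partial D_n \cup (D_n \setminus S)$. Once this is in place, the complementarity and the obstacle inequality do all the remaining work, and symmetry closes the proof.
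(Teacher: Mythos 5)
Your proof is correct, but it takes a genuinely different route from the paper. Both proofs share the crucial observation, extracted from the complementarity relation, that $\bar\eta^{n_1}_k = 0$ on the superlevel set $\{k : Z_k^{n_1} - Z_k^{n_2} > M\}$ (the inclusion $\{Z_k^{n_1}-Z_k^{n_2}>M\}\subset\{Z_k^{n_1}>-V_k^{n_1}\}$ appears verbatim in the paper). After that the two arguments diverge. The paper closes the estimate variationally: subtract the two systems, test against $(Z^{n_1}-Z^{n_2}-M)^+$, use the diagonal dominance of $A^n$ to control $\langle n^2A^nM,(Z^{n_1}-Z^{n_2}-M)^+\rangle$, and then invoke the algebraic fact $\langle b^+,A^nb\rangle\le 0$ (Lemma~\ref{le:thm2}) to force $(Z^{n_1}-Z^{n_2}-M)^+=0$. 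You instead run a pointwise discrete maximum principle: on the superlevel set $W:=Z^{n_1}-Z^{n_2}$ is discretely subharmonic ($\Delta_n W\ge 0$), so a global maximizer forces equality at all neighbours, and a grid-connectivity/propagation argument pushes the maximum to $\partial D_n$ where $W=0\le M$, a contradiction. Your route is more elementary in that it avoids the algebraic inequality of Lemma~\ref{le:thm2}, but it pays for that with the propagation-to-boundary bookkeeping (which, as you note yourself, needs to be written carefully — in particular the maximizer should be taken over all of $\overline D_n$, not just over $\overline S$, so that the sub-mean-value inequality applies directly). The paper's truncation-test-function argument is shorter once Lemma~\ref{le:thm2} is available and generalizes more automatically to other elliptic discretizations satisfying the same sign condition.
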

In order to prove Lemma \ref {le:thm1}, we need the following result (see \cite[Lemma 3.1]{ZHA}).
\begin{lemma}\label{le:thm2}
 $\langle b^+,A^nb\rangle\leq 0$ for all $b\in \mathbb{R}^{(n-1)^d}$.
\end{lemma}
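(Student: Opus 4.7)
The plan is to prove $\langle b^+, A^n b\rangle \le 0$ by a discrete integration-by-parts argument that reduces the assertion to a pointwise inequality on each lattice edge. First I would unfold the inner product as
\[
\langle b^+, A^n b\rangle \;=\; \sum_{x\in D_n} b^+(x)\,(A^n b)(x)
\;=\; \sum_{x\in D_n} b^+(x)\sum_{j=1}^{d}\bigl[b(x+he_j)-2b(x)+b(x-he_j)\bigr],
\]
where $b$, and therefore $b^+$, is extended by zero on $\partial D_n$ and beyond, as in the definition of $\Delta_n$.

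Next I would treat each coordinate direction $j$ separately and apply summation by parts. Writing $b(x+he_j)-2b(x)+b(x-he_j) = [b(x+he_j)-b(x)] - [b(x)-b(x-he_j)]$ and shifting the index in the second sum, the zero-boundary convention kills the boundary contributions and yields
\[
\sum_{x\in D_n} b^+(x)\bigl[b(x+he_j)-2b(x)+b(x-he_j)\bigr]
\;=\; -\sum_{x}\bigl[b^+(x+he_j)-b^+(x)\bigr]\bigl[b(x+he_j)-b(x)\bigr],
\]
the sum running over all edges $(x,x+he_j)$ in $\overline{D}_n$. Summing over $j$ reduces the lemma to showing that every summand on the right-hand side is nonnegative.

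The key pointwise step is the inequality $(a^+-c^+)(a-c)\ge 0$ for all $a,c\in\mathbb{R}$. I would establish it from the decomposition $a=a^+-a^-$, $c=c^+-c^-$, which gives $(a-c)=(a^+-c^+)-(a^--c^-)$ and hence
\[
(a^+-c^+)(a-c) \;=\; (a^+-c^+)^2 - (a^+-c^+)(a^--c^-).
\]
Since $a^+a^-=c^+c^-=0$, the cross term expands to $(a^+-c^+)(a^--c^-)=-a^+c^--c^+a^-\le 0$, so $(a^+-c^+)(a-c)\ge (a^+-c^+)^2 \ge 0$. Applied with $a=b(x+he_j)$ and $c=b(x)$, this makes every edge contribution on the right-hand side nonnegative, and the lemma follows.

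The only real obstacle is the bookkeeping for the index shifts in the summation by parts (especially verifying that the convention $b\equiv 0$ outside $D_n$ makes all boundary terms vanish, including those contributed by edges straddling $\partial D_n$); the pointwise positive-part inequality is then elementary. The argument is dimension-free because the multidimensional discrete Laplacian $\Delta_n$ is a direction-wise sum, so each direction can be handled independently by the 1D calculation above.
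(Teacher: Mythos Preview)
Your proof is correct. The paper does not give its own proof of this lemma; it simply cites \cite[Lemma 3.1]{ZHA} and moves on. Your summation-by-parts reduction to the edge inequality $(a^+-c^+)(a-c)\ge 0$ is the standard argument for this kind of discrete maximum-principle estimate, and the bookkeeping you outline (extend $b$ and $b^+$ by zero outside $D_n$, shift indices, observe that the boundary contributions vanish) goes through exactly as you describe. The dimension-free remark is also right: since $A^n$ acts as a sum of one-dimensional second differences along each coordinate direction, one may group the sum into lattice lines parallel to $e_j$ and apply the 1D computation on each line separately. Your proof therefore supplies what the paper omits.
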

\begin{proof}[\textit{Proof of Lemma \ref{le:thm1} :}]
Set $m:=\sup\limits_k|V_k^{n_1}-V_k^{n_2}|$ and $M:=(m,\cdots,m)\in\mathbb{R}^{(n-1)^d}$. As $(Z^{n_i},\bar{\eta}^{n_i})$ is a solution of (\ref{eq:Det2}), we have
\begin{equation*}
-n^2A^n(Z^{n_1}-Z^{n_2}-M)-n^2A^nM=\bar{\eta}^{n_1}-\bar{\eta}^{n_2}.
\end{equation*}
Multiplying the above equality by $(Z^{n_1}-Z^{n_2}-M)^+$ we get
\begin{small}\begin{eqnarray*}
  &&\langle-n^2A^n(Z^{n_1}-Z^{n_2}-M),(Z^{n_1}-Z^{n_2}-M)^+\rangle\\
  &=&\langle n^2A^nM,(Z^{n_1}-Z^{n_2}-M)^+\rangle+\langle\bar{\eta}^{n_1},(Z^{n_1}-Z^{n_2}-M)^+\rangle-\langle\bar{\eta}^{n_2},(Z^{n_1}-Z^{n_2}-M)^+\rangle.
\end{eqnarray*}\end{small}
Noting the definition of $A^n$, which is a diagonally dominant matrix. For example, when $d=1$, $A^{n,1}M=(-m,0,\cdots,0,-m)$, then
\begin{equation*}
\langle n^2A^{n,1}M,(Z^{n_1}-Z^{n_2}-M)^+\rangle=-n^2m(Z_1^{n_1}-Z_1^{n_2}-m)^+-n^2m(Z_{n-1}^{n_1}-Z_{n-1}^{n_2}-m)^+\leq 0.
\end{equation*}
When $d=2,3$, letting $A^n=A^{n,d}$, we also have
\begin{equation*}
  \langle n^2A^{n}M,(Z^{n_1}-Z^{n_2}-M)^+\rangle\leq 0.
\end{equation*}
Observe that
\begin{equation*}
\{k; Z_k^{n_1}-Z_k^{n_2}>m\}\subset\{k; Z_k^{n_1}>-V_k^{n_2}+m\}\subset\{k; Z_k^{n_1}>-V_k^{n_1}\}.
\end{equation*}
This yields
\begin{equation*}
\langle\bar{\eta}^{n_1},(Z^{n_1}-Z^{n_2}-M)^+\rangle \leq\sum_{k=1}^{n-1}(Z_k^{n_1}-Z_k^{n_2}-m)^+1_{\{Z_k^{n_1}>-V_k^{n_1}\}}\bar{\eta}_k^{n_1}=0.
\end{equation*}
Moreover, it is obvious that $\langle\bar{\eta}^{n_2},(Z^{n_1}-Z^{n_2}-M)^+\rangle\geq0$.
Therefore, $\langle-n^2A^n(Z^{n_1}-Z^{n_2}-M),(Z^{n_1}-Z^{n_2}-M)^+\rangle\leq 0$.
By Lemma \ref{le:thm2}, we deduce
\begin{equation*}
 |(Z^{n_1}-Z^{n_2}-M)^+|^2= 0,
\end{equation*}
which means
$Z^{n_1}-Z^{n_2}\leq M.$
\end{proof}
Choose the sequence $y=(y_1,\cdots,y_{(n-1)^d})$ consisting of all points in $D_n$ arranged by the natural ordering.
For $n\in\mathbb{N}^*$, define the continuous function $z^n$ by choosing $z^n(y_k)=Z^n_k, 1\leq k\leq (n-1)^d$ and from the values $z^n$ by linear interpolation in each variable $x_1,\cdots,x_d$ successively. Indeed, when $d=3$, $(x_1,x_2,x_3)\in [\frac{k_1}{n},\frac{k_1+1}{n})\times[\frac{k_2}{n},\frac{k_2+1}{n})\times[\frac{k_3}{n},\frac{k_3+1}{n})$, we have
\begin{eqnarray*}
&&z^n(x_1,\frac{k_2}{n},\frac{k_3}{n})=z^n(\frac{k_1}{n},\frac{k_2}{n},\frac{k_3}{n})+n(x_1-\frac{k_1}{n})(z^n(\frac{k_1+1}{n},\frac{k_2}{n},\frac{k_3}{n})-z^n(\frac{k_1}{n},\frac{k_2}{n},\frac{k_3}{n})),\\
&&z^n(x_1,x_2,\frac{k_3}{n})=z^n(x_1,\frac{k_2}{n},\frac{k_3}{n})+n(x_2-\frac{k_2}{n})(z^n(x_1,\frac{k_2+1}{n},\frac{k_3}{n})-z^n(x_1,\frac{k_2}{n},\frac{k_3}{n})),\\
&&z^n(x_1,x_2,x_3)=z^n(x_1,x_2,\frac{k_3}{n})+n(x_3-\frac{k_3}{n})(z^n(x_1,x_2,\frac{k_3+1}{n})-z^n(x_1,x_2,\frac{k_3}{n})).
\end{eqnarray*}
Similarly, define $\eta^n(x),\ x\in D$ by choosing $\eta^n(y_k)=\bar{\eta}^n_k, 1\leq k\leq (n-1)^d$ and linear interpolation from the values $\eta^n$ on $D_n$.

\begin{theorem}\label{thm:conz}
 Let $z$ be the solution of (\ref{eq:Det1}). Then
\begin{equation*}
 \lim\limits_{n\to\infty}\sup\limits_{x\in D}|z^n(x)-z(x)|=0.
\end{equation*}
\end{theorem}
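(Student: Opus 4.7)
The strategy is to approximate the obstacle $v$ by a smooth function $v_\epsilon$, use the continuous-dependence estimates available at both the continuous and discrete levels to reduce to the smooth case, and then close the smooth case through a penalization argument combined with the Green function estimates recalled in Section~3. Given $\epsilon>0$, I would extend $v$ by $0$ outside $D$, mollify, and cut off to obtain $v_\epsilon\in C^2(\bar D)$ with $v_\epsilon|_{\partial D}=0$ and $\|v-v_\epsilon\|_\infty<\epsilon$. Let $(z_\epsilon,\eta_\epsilon)$ be the solution of \eqref{eq:Det1} with $v$ replaced by $v_\epsilon$ and let $(Z^{n,\epsilon},\bar\eta^{n,\epsilon})$ be the corresponding solution of \eqref{eq:Det2}, with $z^n_\epsilon$ its linear interpolation. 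Proposition~\ref{pro:inequality} gives $\|z-z_\epsilon\|_\infty\leq\epsilon$, while Lemma~\ref{le:thm1} gives $\sup_k|Z^n_k-Z^{n,\epsilon}_k|\leq\epsilon$, which passes to $\|z^n-z^n_\epsilon\|_\infty\leq\epsilon$ since a multilinear interpolant on a cube attains its extrema at the vertices. Thus it suffices to prove $\|z^n_\epsilon-z_\epsilon\|_\infty\to 0$ as $n\to\infty$ for each fixed smooth $v_\epsilon$.

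For this smooth-obstacle case I would introduce, for each $\lambda>0$, the continuous penalized solution $z^{\lambda,\epsilon}$ of $-\Delta w=\lambda(w+v_\epsilon)^-$ in $D$ with $w|_{\partial D}=0$, and the discrete penalized solution $Z^{n,\lambda,\epsilon}$ of $-n^2A^nW=\lambda(W+V^{n,\epsilon})^-$ on $D_n$ with $W|_{\partial D_n}=0$; write $z^{n,\lambda,\epsilon}$ for its linear interpolation. Standard penalization theory for obstacle problems with smooth data (recalled in Remark~1\,(ii) for the continuous case) yields $\|z^{\lambda,\epsilon}-z_\epsilon\|_\infty\to 0$ as $\lambda\to\infty$. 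For fixed $\lambda$, both penalized problems admit Green function representations
\begin{equation*}
z^{\lambda,\epsilon}(x)=\lambda\int_D K(x,y)(z^{\lambda,\epsilon}(y)+v_\epsilon(y))^-\,dy,\qquad z^{n,\lambda,\epsilon}(x)=\lambda\int_D K^n(x,y)(z^{n,\lambda,\epsilon}(\underline{k}_n(y))+v_\epsilon(\underline{k}_n(y)))^-\,dy,
\end{equation*}
and splitting the difference $z^{n,\lambda,\epsilon}-z^{\lambda,\epsilon}$ into a part controlled by $K^n-K$ and a part controlled by the $1$-Lipschitz nonlinearity $(\cdot)^-$ and using \eqref{eq:Kestimate}--\eqref{eq:Klipschitz} together with the convergence of $K^n$ to $K$ from \cite{GYO}, one obtains $\|z^{n,\lambda,\epsilon}-z^{\lambda,\epsilon}\|_\infty\to 0$ as $n\to\infty$ for each fixed $\lambda$.

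The main obstacle is the uniform-in-$n$ penalization estimate $\|z^{n,\lambda,\epsilon}-z^n_\epsilon\|_\infty\to 0$ as $\lambda\to\infty$, needed in order to close the triangle inequality
\begin{equation*}
\|z^n_\epsilon-z_\epsilon\|_\infty\leq \|z^n_\epsilon-z^{n,\lambda,\epsilon}\|_\infty+\|z^{n,\lambda,\epsilon}-z^{\lambda,\epsilon}\|_\infty+\|z^{\lambda,\epsilon}-z_\epsilon\|_\infty
\end{equation*}
by first choosing $\lambda$ large and then sending $n\to\infty$. I would obtain this by transposing the continuous penalization argument to the lattice: subtract the defining equations for $Z^{n,\lambda,\epsilon}$ and $Z^{n,\epsilon}$, take the discrete inner product with $(Z^{n,\lambda,\epsilon}+V^{n,\epsilon})^-$, invoke Lemma~\ref{le:thm2} to control the quadratic form in $A^n$, and exploit the uniform-in-$n$ bound on $\bar\eta^{n,\epsilon}$ which is available in the smooth case since $\bar\eta^{n,\epsilon}$ is the finite-difference realization of the bounded density $(-\Delta v_\epsilon)^+$ supported on the coincidence set. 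Combining the three pieces and letting $n\to\infty$, then $\lambda\to\infty$, and finally $\epsilon\to 0$ completes the argument.
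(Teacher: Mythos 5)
Your reduction of the general case to the smooth case is exactly the paper's Step~(II): mollify $v$, apply Proposition~\ref{pro:inequality} on the continuous side and Lemma~\ref{le:thm1} on the discrete side, and note that a multilinear interpolant attains its extrema at lattice vertices. The divergence is in how the smooth case is handled. The paper does not penalize at the discrete level. It writes $z^n(x)=\int_D K^n(x,y)\,\eta^n(\underline{k}_n(y))\,dy$, uses the complementarity to derive the uniform bound $|\eta^n(x)|\le 2d\|v\|_2$ (this is your ``uniform-in-$n$ bound on $\bar\eta^{n,\epsilon}$'', made precise via the discrete second-difference representation), then deduces from \eqref{eq:Klipschitz} that $\{z^n\}$ is uniformly H\"older continuous and $\{\eta^n(\underline{k}_n(\cdot))\}$ is bounded in $L^2(D)$. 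Arzel\`a--Ascoli plus weak $L^2$-compactness produce a subsequential limit $(z,\eta)$; passing to the limit in the discrete weak formulation and in the complementarity relation (using \eqref{eq:con1} and dominated convergence) shows the limit solves \eqref{eq:Det1}, and uniqueness upgrades to full-sequence convergence. This is a compactness argument, not a penalization argument.

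Your penalization route is a genuinely different strategy and it can in principle be closed, but the specific tool you propose for the key step does not do the job. Pairing $-n^2A^n$ applied to the difference with $(Z^{n,\lambda,\epsilon}+V^{n,\epsilon})^-$ and invoking Lemma~\ref{le:thm2} produces an estimate in the \emph{normalized discrete $\ell^2$ norm}, roughly $\lambda\,n^{-d}\sum_k\bigl|(Z^{n,\lambda,\epsilon}_k+V^{n,\epsilon}_k)^-\bigr|^2\le C(v_\epsilon)$, hence $\bigl\|(Z^{n,\lambda,\epsilon}+V^{n,\epsilon})^-\bigr\|_{\ell^2_n}\le C(v_\epsilon)/\lambda$. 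This does not control the $\ell^\infty$ norm uniformly in $n$: the trivial embedding of discrete $\ell^2_n$ into $\ell^\infty$ costs a factor $n^{d/2}$, and the continuous-problem route of converting $L^2$ into $L^\infty$ through the Green kernel does not apply, because the quantity you need to make small is the penalty \emph{density} $(Z^{n,\lambda,\epsilon}+V^{n,\epsilon})^-$, not its convolution with $K^n$. What does close the argument is a discrete maximum principle rather than the quadratic form of Lemma~\ref{le:thm2}: at the interior minimum of $Z^{n,\lambda,\epsilon}+V^{n,\epsilon}$ the discrete Laplacian is nonnegative, giving
\begin{equation*}
\lambda\,\bigl\|(Z^{n,\lambda,\epsilon}+V^{n,\epsilon})^-\bigr\|_{\infty}\le \|\Delta_n v_\epsilon\|_\infty\le 2d\|v_\epsilon\|_2,
\end{equation*}
and then a second comparison shows $\|Z^{n,\epsilon}-Z^{n,\lambda,\epsilon}\|_\infty\le\|(Z^{n,\lambda,\epsilon}+V^{n,\epsilon})^-\|_\infty\le C(v_\epsilon)/\lambda$, uniformly in $n$. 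With that substitution your triangle inequality closes, but as written the inner-product step is a gap. Even repaired, the penalization route is longer than the compactness proof: you need a third grid of objects (continuous penalized, discrete penalized, discrete obstacle) and two maximum-principle comparisons, whereas the paper gets away with the single uniform bound on $\eta^n$ plus Arzel\`a--Ascoli.
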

\begin{proof}
We divide the proof into two steps.

(I) Suppose $v\in C^2(D)$. We have
\begin{equation*}
Z^n=B^{-1}\bar{\eta}^n.
\end{equation*}
Recall the kernel $K^n(x,y)$ by \eqref{eq:K^ndefinition}.
It is easy to verify that
\begin{equation*}
 z^n(x)=\int_D K^n(x,y)\eta^n(\underline{k}_n(y))dy.
\end{equation*}

Next, we estimate $\eta^n$. In view of $V^n|_{\partial D_n}=0$, for $k=1,\cdots,(n-1)^d$, either $\bar{\eta}_k^n=0$ or $\bar{\eta}_k^n>0$ with $Z_k^n=-V^n_k$, then we have for $x\in D_n$
\begin{equation}\label{eq:eta}\begin{split}
  |\eta^n(x)|&\leq \sum\limits_{j=1}^d n^2|v(x+\frac{1}{n}e_j)-2v(x)+v(x-\frac{1}{n}e_j)|\\
  &=\sum\limits_{j=1}^d n^2|\int_{x_j}^{x_j+\frac{1}{n}}dy\int_{x_j}^y \frac{\partial^2 v(z)}{\partial z_j^2}dz_j+\int_{x_j-\frac{1}{n}}^{x_j}dy\int^{x_j}_y \frac{\partial^2 v(z)}{\partial z_j^2}dz_j|\\&\leq 2d\|v\|_2.
\end{split}\end{equation}
Therefore, by \eqref{eq:eta} and the smoothness assumption on $v$, we have
\begin{equation}\label{eq:eta^nbounded}
 \int_D |\eta^n(\underline{k}_n(y))|^2dy\leq\sup\limits_{x\in D_n}|\eta^n(x)|\leq C.
\end{equation}
In view of \eqref{eq:Klipschitz} and \eqref{eq:eta^nbounded}, by H\"{o}lder's inequality, we deduce that there exists a constant $C$ depending only on $(d,\epsilon),\ \forall \epsilon >0$, such that
\begin{equation}\label{eq:con1}\begin{split}
  |z^n(x)-z^n(z)|^2&\leq |\int_D(K^n(x,y)-K^n(z,y))\eta^n(\underline{k}^n(y))dy|^2\\
  &\leq (\int_D|K^n(x,y)-K^n(z,y)|^2dy)(\int_D|\eta^n(\underline{k}_n(y))|^2dy)\\
  &\leq C|x-z|^{4\gamma(d,\epsilon)}, \quad x,z\in D.
\end{split}\end{equation}

By Arzela-Ascoli theorem, we know that $\{z^n(x), n\geq 1\}$ is relatively compact. On the other hand, $\{\eta^n, n\geq 1\}$ is relatively compact in $L^2(D)$ with respect to the weak topology. Selecting a subsequence if necessary, we can assume that $z^n(\cdot)$ converges uniformly to some function $z(\cdot)\in C(D)$ and $\eta^n(\underline{k}_n(\cdot))$ converges weakly to some $\eta(\cdot)\in L^2(D)$. We complete the first step by showing that $(z,\eta)$ is the solution of system (\ref{eq:Det1}). Choose the sequence $x=(x_1,\cdots,x_{(n-1)^d})$ consisting of all points in $D_n$ arranged by the natural ordering. For $\phi\in C_0^\infty(D)$, set $\phi^n:=(\phi(x_1),\cdots,\phi(x_{(n-1)^d}))$. By the symmetry of $A^n$, we have
\begin{equation*}
 -\langle n^2A^n\phi^n,Z^n\rangle =\langle \phi^n,\bar{\eta}^n\rangle.
\end{equation*}
Multiplying the above equation by $\frac{1}{n^d}$ we get
\begin{equation*}
 -\int_D \Delta_n\phi(\underline{k}_n(y))z^n(\underline{k}_n(y))dy=\int_D\phi(\underline{k}_n(y))\eta^n(\underline{k}_n(y))dy.
\end{equation*}
 Letting $n\to\infty$, in view of the strong convergence of $\phi(\underline{k}_n(y))$ we obtain
\begin{equation*}
 -\int_D\Delta\phi(y)z(y)dy=\int_D\phi(y)\eta(y)dy.
\end{equation*}
On the other hand, it follows from the definition that
\begin{equation}\label{eq:thm1}
 \int_D(z^n(\underline{k}_n(y))+v(\underline{k}_n(y)))\eta^n(\underline{k}_n(y))dy=0.
\end{equation}
Invoking (\ref{eq:con1}) and the dominated convergence theorem we have
\begin{equation}\label{eq:thm2}\begin{split}
 &\int_D(z^n(\underline{k}_n(y))+v(\underline{k}_n(y))-z(y)-v(y))^2dy\\
\leq&\, C\int_D(z^n(\underline{k}_n(y))-z^n(y))^2dy+C\int_D(z^n(y)-z(y))^2dy+C\int_D(v(\underline{k}_n(y))-v(y))^2dy\\
\leq&\, C(\frac{1}{n})^{4\gamma(d,\epsilon)}+C\int_D(z^n(y)-z(y))^2dy+C\int_D(v(\underline{k}_n(y))-v(y))^2dy\to 0, \quad n\to\infty.
\end{split}
\end{equation}
Letting $n\to\infty$ in (\ref{eq:thm1}), in view of the weak convergence of $\eta^n$ and (\ref{eq:thm2}) we have
\begin{equation*}
 \int_D(z(y)+v(y))\eta(y)dy=0.
\end{equation*}
Thus, $(z,\eta)$ is a solution of (\ref{eq:Det1}).

(II) For the general case $v\in C(D)$, we take a sequence $v^m\in C^2(D), m\in\mathbb{N}^*$ such that $\sup_{x\in D}|v^m(x)-v(x)|\to 0$ as $m\to \infty$. Choose the sequence $x=(x_1,\cdots,x_{(n-1)^d})$ consisting of all points in $D_n$ arranged by the natural ordering. For $n\in\mathbb{N}^*$, we define
\begin{equation*}
 V^{m,n}:=(v^m(x_i))_{1\leq i\leq (n-1)^d}.
\end{equation*}
Let $(Z^{m,n},\eta^{m,n})$ be the solution to the following obstacle problem in $\mathbb{R}^{(n-1)^d}$:
\begin{equation*}
\left\{ \begin{aligned}
    &-n^2A^nZ^{m,n}=\bar{\eta}^{m,n};\\
    &Z^{m,n}\geq -V^{m,n};\\
    &\langle Z^{m,n}+V^{m,n},\bar{\eta}^{m,n}\rangle=0.
  \end{aligned}\right.
\end{equation*}
Mimic the definition of $z^n$. Introduce the continuous functions $z^{m,n}$ by setting $z^{m,n}(x_k)=Z^{m,n}_k$ for $1\leq k\leq (n-1)^d$ and from the values of $z^{m,n}$ on $D_n$ by linear interpolation.

 According to the result proved in the first step, for $m\in\mathbb{N}^*$, we have
\begin{equation}\label{eq:thm3}
 \lim\limits_{n\to\infty}\sup\limits_{x\in D}|z^{m,n}(x)-z^{(m)}(x)|=0,
\end{equation}
where $z^{(m)}(x)$ is the solution of  the following elliptic obstacle problem:
\begin{equation*}
 \left\{\begin{aligned}
    &\frac{\partial^2 z^{(m)}(x)}{\partial x^2}=\eta^{(m)}(x);\\
    &z^{(m)}(x)\geq -v^m(x);\\
    &\int_D(z^{(m)}(x)+v^m(x))\eta^{(m)}(dx)=0.
  \end{aligned}\right.
\end{equation*}
By the definition of $z^{m,n}$ and $z^n$, the difference of $z^{m,n}$ and $z^n$ at point $x\in D$ can be decided by at most $2^d$ points in the neighbor of $x$ on the lattice $D_n$. For example, when $d=1$, and $x\in[\frac{k}{n},\frac{k+1}{n})$, we have
\begin{equation*}\begin{split}
  |z^{m,n}(x)-z^n(x)|&=|(Z_k^{m,n}-Z_k^n)+(x-\frac{k}{n})\frac{(Z_{k+1}^{m,n}-Z_{k+1}^n)-(Z_k^{m,n}-Z_k^n)}{\frac{k+1}{n}-\frac{k}{n}}|\\
  &\leq|(Z_k^{m,n}-Z_k^n)\vee (Z_{k+1}^{m,n}-Z_{k+1}^n)|.
\end{split}\end{equation*}
Therefore,
\begin{equation}
 \sup\limits_{x\in D}|z^{m,n}(x)-z^n(x)|=\sup\limits_{1\leq k\leq (n-1)^d}|Z_k^{m,n}-Z_k^n|.
\end{equation}
Then, in view of Lemma \ref{le:thm1}, we have
\begin{equation}\label{eq:thm4}\begin{split}
  \sup\limits_{x\in D}|z^{m,n}(x)-z^n(x)|
&=\sup\limits_{1\leq k\leq (n-1)^d}|Z_k^{m,n}-Z_k^n|
\leq\sup\limits_{1\leq k\leq (n-1)^d}|V_k^{m,n}-V_k^n|\\
&=\sup\limits_{x\in D_n}|v^m(x)-v(x)|
\leq\sup\limits_{x\in D}|v^m(x)-v(x)|.
\end{split}\end{equation}
Besides, in view of Proposition \ref{pro:inequality}, we get
\begin{equation}\label{eq:thm5}\begin{split}
 \sup\limits_{x\in D}|z^n(x)-z(x)|
 \leq& \sup\limits_{x\in D}|z^n(x)-z^{(m)}(x)|+\sup\limits_{x\in D}|z^{(m)}(x)-z(x)|\\
 \leq &\sup\limits_{x\in D}|z^n(x)-z^{m,n}(x)|+\sup\limits_{x\in D}|z^{m,n}(x)-z^{(m)}(x)|+\sup\limits_{x\in D}|v^m(x)-v(x)|\\
 \leq & 2\sup\limits_{x\in D}|v^m(x)-v(x)|+\sup\limits_{x\in D}|z^{m,n}(x)-z^{(m)}(x)|.
\end{split}\end{equation}
For $\epsilon>0$, one can choose $m$ sufficiently large such that
\begin{equation}\label{eq:thm6}
 2\sup\limits_{x\in D}|v^m(x)-v(x)|\leq \frac{\epsilon}{2}.
\end{equation}
For such a fixed $m$, we deduce from (\ref{eq:thm3}) that there exists an integer $N$ such that, for $n\geq N$,
\begin{equation}\label{eq:thm7}
 \sup\limits_{x\in D}|z^{m,n}(x)-z^{(m)}(x)|\leq \frac{\epsilon}{2}.
\end{equation}
Combing (\ref{eq:thm5}), (\ref{eq:thm6}) and (\ref{eq:thm7}), we obtain that
\begin{equation*}
 \sup\limits_{x\in D}|z^n(x)-z(x)|\leq\epsilon,
\end{equation*}
for $n\geq N$. As $\epsilon$ is arbitrary, the proof is complete.
\end{proof}

\section{The convergence of the scheme}
Before we can complete the proof of Lemma \ref{le:thm0}, we introduce a priori estimate for the stochastic integrals. The following lemma can be proved by Kolmogrov's lemma and BDG inequality (see \cite[page 159-160]{WEN} for details).
\begin{lemma}\label{le:kolmogorov}
Let $\psi$ be a continuous random field on $D$, $\hat G(x,y)$ is a continuous function satisfying
 $$\|\hat G(x,y)-\hat G(x',y)\|^2_{L^2(D)}\leq B|x-x'|^{4\gamma(d,\epsilon)},$$
 $$\hat C_D:=\sup\limits_{x\in D}\int_{D}|\hat G(x,y)|^2dy<\infty,$$
where $\gamma(d,\epsilon)$ is defined by \eqref{eq:gamma}. Set
$$I(x):=\int_D \hat G(x,y)\psi(y)dW(y).$$
Then for any $p>d/(2\gamma(d,\epsilon))$ we have
$$E[\sup\limits_{x\in D}|I(x)|^p]\leq 2^{p-1}c_p(aB^{p\over 2}+\hat C_D^{p\over 2})E[\|\psi\|_\infty^p],$$
where $c_p$ and $a$ are constants appeared in the BDG inequality and Komogorov's inequality.
\end{lemma}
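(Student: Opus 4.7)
The plan is to split $I(x) = I(x_0) + (I(x)-I(x_0))$ at an arbitrarily fixed base point $x_0\in D$, control the pointwise piece with the BDG inequality, and control the supremum of the increment with Kolmogorov's continuity criterion. The form of the target bound---prefactor $2^{p-1}$ and additive split $aB^{p/2}+\hat C_D^{p/2}$---is exactly what the elementary inequality $(a+b)^p\leq 2^{p-1}(a^p+b^p)$ produces after combining these two pieces, so this decomposition is effectively forced by the conclusion.

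First I would apply BDG at the single point $x_0$. Pulling $\|\psi\|_\infty^p$ out of the quadratic variation and using $\int_D \hat G(x_0,y)^2\,dy\leq \hat C_D$ gives $E|I(x_0)|^p \leq c_p \hat C_D^{p/2}\, E\|\psi\|_\infty^p$. Next I would apply BDG to the increment $I(x)-I(x')$, which is the stochastic integral of $(\hat G(x,\cdot)-\hat G(x',\cdot))\psi(\cdot)$ against $W$; combining this with the prescribed $L^2$-Lipschitz bound on $\hat G$ yields
\[
E|I(x)-I(x')|^p \leq c_p B^{p/2}|x-x'|^{2p\gamma(d,\epsilon)}\, E\|\psi\|_\infty^p.
\]

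The threshold hypothesis $p>d/(2\gamma(d,\epsilon))$ is tailored so that $2p\gamma(d,\epsilon)>d$, which is precisely the condition needed by Kolmogorov's continuity theorem in dimension $d$ to upgrade a two-point $p$-th moment bound into a uniform moment bound on the supremum. Applying Kolmogorov's lemma then produces $E\sup_{x\in D}|I(x)-I(x_0)|^p \leq a\, c_p B^{p/2}\, E\|\psi\|_\infty^p$, with $a$ absorbing the Kolmogorov constant and the diameter of $D$. Combining the pointwise and increment estimates via $\sup_x|I(x)|^p \leq 2^{p-1}\bigl(|I(x_0)|^p + \sup_x|I(x)-I(x_0)|^p\bigr)$ and taking expectations gives the claimed inequality.

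The main obstacle is the Kolmogorov step: in dimensions $2$ and $3$ the effective $L^2$-H\"{o}lder exponent $4\gamma(d,\epsilon)$ of $\hat G$ degrades as $d$ grows, so $p$ must be taken sufficiently large to compensate for the cost of passing from a two-point moment bound to a supremum bound on a $d$-dimensional domain---this is exactly the quantitative content of the hypothesis $p>d/(2\gamma(d,\epsilon))$. Tracking the constants $c_p$ and $a$ from the BDG and Kolmogorov estimates so that they recombine into the precise form $2^{p-1}c_p(aB^{p/2}+\hat C_D^{p/2})$ is the only other point requiring care.
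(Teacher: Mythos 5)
Your proposal is correct and follows exactly the route the paper indicates: the paper does not reproduce the argument but explicitly states that the lemma "can be proved by Kolmogorov's lemma and BDG inequality" (deferring details to \cite{WEN}), and your decomposition $I(x)=I(x_0)+(I(x)-I(x_0))$, BDG at the base point, BDG on increments followed by Kolmogorov's continuity criterion, and the final $(a+b)^p\leq 2^{p-1}(a^p+b^p)$ step reproduce precisely the constant structure $2^{p-1}c_p(aB^{p/2}+\hat C_D^{p/2})$ in the stated bound. You have also correctly identified the role of $p>d/(2\gamma(d,\epsilon))$ as the Kolmogorov threshold $2p\gamma(d,\epsilon)>d$.
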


\begin{proof}[\textit{Proof of Lemma \ref{le:thm0}:}]
 We consider the following iteration:
 for fixed $n\in\mathbb{N}^*$, set $\mathbf{u}^{n,0}=0$ and
 \begin{equation}\label{eq:sec5ell0}
\left\{ \begin{aligned}
   &-n^2A^nV^{n,1}=f^n(0)+n^d\sigma^n(0)\Delta W^n;\\
   &-n^2A^nZ^{n,1}=\bar{\eta}^{n,1};\\
   &Z^{n,1}\geq-V^{n,1};\\
   &\langle Z^{n,1}+V^{n,1},\bar{\eta}^{n,1}\rangle=0.
  \end{aligned}\right.
 \end{equation}
Since $-n^2A^n$ is reversible, the stochastic linear equation admits a unique solution $V^{n,1}$; From the result for the deterministic obstacle problem (see Theorem \ref{thm:uniqueofdeter}), we know that for almost all $\omega$, $(Z^{n,1},\bar{\eta}^{n,1})$ uniquely exists. Hence, $(\mathbf{u}^{n,1}:=Z^{n,1}+V^{n,1},\bar{\eta}^{n,1})$ is the unique solution of \eqref{eq:sec5ell0}.

We iterate this procedure. Suppose, for $m\geq 2$, $\mathbf{u}^{n,m-1}$ has been defined, and $\mathbf{u}^{n,m}$ satisfies the following equation:
 \begin{equation*}
\left\{ \begin{aligned}
   &-n^2A^nV^{n,m}=f^n(\mathbf{u}^{n,m-1})+n^d\sigma^n(\mathbf{u}^{n,m-1})\Delta W^n;\\
   &-n^2A^nZ^{n,m}=\bar{\eta}^{n,m};\\
   &Z^{n,m}\geq-V^{n,m};\\
   &\langle Z^{n,m}+V^{n,m},\bar{\eta}^{n,m}\rangle=0.
  \end{aligned}\right.
 \end{equation*}
Then, $(\mathbf{u}^{n,m}:=Z^{n,m}+V^{n,m},\bar{\eta}^{n,m})$ exists and is unique.

By Lemma \ref{le:thm1}, we have
 \begin{equation*}\begin{split}
  \|\mathbf{u}^{n,m}-\mathbf{u}^{n,m-1}\|_{\infty} &= \sup\limits_{1\leq k\leq (n-1)^d} |\mathbf{u}^{n,m}_k-\mathbf{u}^{n,m-1}_k|\\
  &\leq \sup\limits_{1\leq k\leq (n-1)^d} |(V^{n,m}_k-V^{n,m-1}_k)+(Z^{n,m}_k-Z^{n,m-1}_k)|\\
  &\leq\, 2\sup\limits_{1\leq k\leq (n-1)^d}|(V^{n,m}_k-V^{n,m-1}_k)|.
\end{split}\end{equation*}
This yields
\begin{equation*}
 E\sup\limits_{1\leq k\leq (n-1)^d}|\mathbf{u}^{n,m}_k-\mathbf{u}^{n,m-1}_k|^p\leq 2^p E\sup\limits_{1\leq k\leq (n-1)^d}|V^{n,m}_k-V^{n,m-1}_k|^p.
\end{equation*}
 Choose the sequence $x=(x_1,\cdots,x_{(n-1)^d})$ consisting of all points in $D_n$ arranged by the natural ordering. Define a discrete random field $u^{n,r}(y),\ r\geq 0,\ y\in D_n$, such that $u^{n,r}(x_i)=\mathbf{u}^{n,r}_i,\ 1\leq i\leq (n-1)^d$. Define a discrete random field $v^{n,r}(y),\ r\geq 0,\ y\in D_n$, such that $v^{n,r}(x_i)=V^{n,r}_i,\ 1\leq i\leq (n-1)^d$. For $x\in D_n$
\begin{equation*}\begin{split}
 v^{n,m}(x)=&\int_D K_n(x,y)f(\underline{k}_n(y),u^{n,m-1}(\underline{k}_n(y)))dy\\
 &+\int_D K_n(x,y)\sigma(\underline{k}_n(y),u^{n,m-1}(\underline{k}_n(y)))dW(y).\end{split}
\end{equation*}
Denote
$$I(k):=V^{n,m}_k-V^{n,m-1}_k=v^{n,m}(x_k)-v^{n,m-1}(x_k),\quad 1\leq k\leq (n-1)^d.$$
$$I_1(x_k):=\int_D K_n(x_k,y)[f(\underline{k}_n(y),u^{n,m-1}(\underline{k}_n(y)))-f(\underline{k}_n(y),u^{n,m-2}(\underline{k}_n(y)))]dy,$$
$$I_2(x_k):=\int_D K_n(x_k,y)[\sigma(\underline{k}_n(y),u^{n,m-1}(\underline{k}_n(y)))-\sigma(\underline{k}_n(y),u^{n,m-2}(\underline{k}_n(y)))]dW(y).$$
Then we have
\begin{equation*}
I(k)=I_1(x_k)+I_2(x_k).
\end{equation*}

Through simple calculations, in view of Assumption \ref{ass:a1}, we have
\begin{equation}\label{eq:estin1_in_cha3}
  E[\sup\limits_{1\leq k\leq (n-1)^d}|I_1(x_k)|^p]\leq L_1^p C_D^{p\over 2}E[\sup\limits_{y\in D}|(u^{n,m-1}-u^{n,m-2})(\underline{k}_n(y))|^p].
\end{equation}
In view of Lemma \ref{le:kolmogorov}, formula \eqref{eq:Klipschitz} and Assumption \ref{ass:a1}, we obtain
\begin{equation}\label{eq:estin2_in_cha3}
 E[\sup\limits_{1\leq k\leq (n-1)^d}|I_2(x_k)|^p]\leq 2^{p-1}c_p L_1^p (aB^{p\over 2}+C_D^{p\over 2})E[\sup\limits_{y\in D}|(u^{n,m-1}-u^{n,m-2})(\underline{k}_n(y))|^p].
\end{equation}
In view of formulas \eqref{eq:estin1_in_cha3} and \eqref{eq:estin2_in_cha3}, we have
\begin{eqnarray*}
  E[\sup\limits_{1\leq k\leq (n-1)^d} |I(k)|^p]&\leq& 2^{p-1}E[\sup\limits_{1\leq k\leq (n-1)^d}|I_1(x_k)|^p]+2^{p-1}E[\sup\limits_{1\leq k\leq (n-1)^d}|I_2(x_k)|^p]\\
  &\leq&
  [2^{p-1}L_1^p C_D^{p\over 2}+2^{2p-2}c_p L_1^p (aB^{p\over 2}+C_D^{p\over 2})]\\
  &&\times E[\sup\limits_{y\in D} |(u^{n,m-1}-u^{n,m-2})(\underline{k}_n(y))|^p].
\end{eqnarray*}
Let $\tilde{C}:= 2^{2p-1}L_1^p C_D^{p\over 2}+2^{3p-2}c_p L_1^p (aB^{p\over 2}+C_D^{p\over 2})$, we have
\begin{equation*}\begin{split}
E[\sup\limits_k |\mathbf{u}^{n,m}_k-\mathbf{u}^{n,m-1}_k|^p]\leq &\, \tilde{C} E[\sup\limits_k|\mathbf{u}^{n,m-1}_k-\mathbf{u}^{n,m-2}_k|^p]\\
\leq&\,\cdots\leq \,\tilde{C}^{m-1} E[\sup\limits_k |\mathbf{u}^{n,1}_k-\mathbf{u}^{n,0}_k|^p].
\end{split}\end{equation*}
In view of the condition \eqref{eq:con_in_lemma}, there exists a $p$ such that $\tilde C<1$. Then we obtain that for any $m_1\geq m_2\geq 1$,
 \begin{equation*}
  E\|\mathbf{u}^{n,m_1}-\mathbf{u}^{n,m_2}\|_\infty^p\to 0, \quad \text{as}\ m_1,\ m_2\to \infty.
 \end{equation*}
 Hence, there exists a $\mathbf{u}^n$ such that
 \begin{equation*}
  \lim\limits_{m\to\infty} E\sup\limits_k |\mathbf{u}^{n,m}_k-\mathbf{u}^n_k|^p=0.
 \end{equation*}

Similarly, we can prove the existence of $V^n$ satisfying
\begin{equation*}
 \lim\limits_{m\to\infty} E\sup\limits_k |V^{n,m}_k-V^n_k|^p=0.
\end{equation*}
Hence, by Lemma \ref{le:thm1}, we deduce there exists a random vector $Z^n$ such that
\begin{equation*}
 \lim\limits_{m\to\infty} E\sup\limits_k |Z^{n,m}_k-Z^n_k|^p=0.
\end{equation*}
Then, as $\bar{\eta}^{n,m}$ is a $(n-1)^d$-dim vector, when $m\to\infty$, we have $\bar{\eta}^{n,m}\to\bar{\eta}^n$ due to $Z^{n,m}\to Z^n$. Moreover, we have $\langle Z^n+V^n,\bar{\eta}^n\rangle=0$. Put $\mathbf{u}^n=Z^n+V^n$, then $(\mathbf{u}^n,\bar{\eta}^n)$ is a solution of (\ref{eq:Ell3}).\\
Now, we come to the uniqueness.
Assume that $(\mathbf{u}_1^n,\bar{\eta}_1^n)$ and $(\mathbf{u}_2^n,\bar{\eta}_2^n)$ are two solutions of (\ref{eq:Ell3}). From the above calculation, we know
\begin{equation*}
 E[\|\mathbf{u}_1^n-\mathbf{u}_2^n\|_\infty^p] \leq 2^p E[\|V_1^n-V_2^n\|^p_\infty]\leq \tilde C E [|\mathbf{u}_1^n-\mathbf{u}_2^n|^p].
\end{equation*}
Since $\tilde C<1$ for some $p$, so we have $\mathbf{u}_1^n=\mathbf{u}_2^n, \quad a.s.$\\
On the other hand, choose the sequence $x=(x_1,\cdots,x_{(n-1)^d})$ consisting of all points in $D_n$ arranged by the natural ordering. We have for any $\phi^n\in C^\infty(D)$
\begin{equation*}
 \sum\limits_{k=1}^{(n-1)^d}\phi^n(x_k)((\bar{\eta}_1^n)_k-(\bar{\eta}_2^n)_k)=0,
\end{equation*}
which means
$ \bar{\eta}_1^n=\bar{\eta}_2^n, \ a.s..$

\end{proof}

Let $V^n$ be the solution of the following stochastic equation:
\begin{equation}
 -n^2A^nV^n=f^n(\mathbf{u}^n)+n^d \sigma^n(\mathbf{u}^n) \Delta W^n,
\end{equation}
and  $(\mathbf{u}^n,\bar{\eta}^n)$ solve (\ref{eq:Ell3}), then $(Z^n:= \mathbf{u}^n-V^n, \bar{\eta}^n)$ is the solution of the following problem:
\begin{equation}\label{eq:sec5ell1}
 \left\{\begin{aligned}
    &-n^2A^nZ^n=\bar{\eta}^n;\\
    &Z^n\geq -V^n;\\
    &\langle Z^n+V^n,\bar{\eta}^n\rangle =0.
  \end{aligned}\right.
\end{equation}
Recall the continuous random field $\tilde u^n(x)$ defined in (\ref{eq:tildeu}). Choose the sequence\\
 $z=(z_1,\cdots,z_{(n-1)^d})$ consisting of all points in $D_n$ arranged by the natural ordering. Define the continuous random fields $\eta^n(x),\ v^n(x)$ by setting $\eta^n(z_k)=\bar{\eta}^n_k$ and $v^n(z_k)=V^n_k$ for $1\leq k\leq (n-1)^d$, and by setting $\eta^n(x)$ and $v^n(x)$ for $x\in D\backslash D_n$ from the values $\eta^n$ and $v^n$ on $D_n$ respectively by linear interpolation,
with $\eta^n|_{\partial D}=0$ and $v^n|_{\partial D}=0$. Let the kernel $K^n(x,y)$ be defined as in \eqref{eq:K^ndefinition}. It is easy to verify that $v^n$ satisfies the following relation:
\begin{equation}\label{eq:v^n1}
  v^n(x)=\int_D K^n(x,y)f(\underline{k}_n(y),u^n(\underline{k}_n(y)))dy+\int_D K^n(x,y)\sigma(\underline{k}_n(y),u^n(\underline{k}_n(y)))W(dy).
\end{equation}

Now we come to prove our main theorem.

\begin{proof}[{\bf Proof of Theorem \ref{thm:main}:}]
 We know that \eqref{eq:def1} can be written as the integral formulation (see \cite[Lemma 2.3]{BUC4}). Recalling the definition of the Green function $K(x,y)$ we have
\begin{equation*}
 u(x)=\int_D K(x,y)f(y,u(y))dy+\int_DK(x,y)\sigma(y,u(y))W(dy)+\int_D K(x,y)\eta(dy).
\end{equation*}
Set
\begin{equation*}
 \bar{v}(x):=\int_D K(x,y)f(y,u(y))dy+\int_D K(x,y)\sigma(y,u(y))W(dy).
\end{equation*}
Then $(\bar{z}(x):=u(x)-\bar{v}(x),\eta(x))$ solves the following random elliptic obstacle problem:
\begin{equation}
\left\{ \begin{aligned}
    &-\Delta\bar{z}(x)=\eta(x);\\
    &\bar{z}(x)\geq -\bar{v}(x),\quad x\in D;\\
    &\int_D(\bar{z}(x)+\bar{v}(x))\eta(dx)=0.
  \end{aligned}\right.
\end{equation}
Choose the sequence $y=(y_1,\cdots,y_{(n-1)^d})$ consisting of all points in $D_n$ arranged by the natural ordering. For $n\in\mathbb{N}^*$, define
\begin{equation*}
 \bar{V}^n:=(\bar{V}^n_i)_{1\leq i\leq (n-1)^d}=(\bar{v}(y_i))_{1\leq i\leq (n-1)^d}.
\end{equation*}

We consider the following random obstacle problem in $\mathbb{R}^{(n-1)^d}$:
\begin{equation}\label{eq:sec5ell2}
\left\{ \begin{aligned}
    &-n^2A^n\bar{Z}^n=\bar{\eta}^n;\\
    &\bar{Z}^n\geq -\bar{V}^n;\\
    &\langle\bar{Z}^n+\bar{V}^n,\bar{\eta}^n\rangle=0.
\end{aligned}\right.
\end{equation}
Introduce the continuous random field $\bar{z}^n(x)$ by setting $\bar{z}^n(y_k)=\bar{Z}^n_k$ for $1\leq k\leq (n-1)^d$ and for $x\in D$ by linear interpolation from the values $\bar{z}^n$ on $D_n$ and $\bar{z}^n|_{\partial D_n}=0$. By Theorem \ref{thm:conz}, we have
\begin{equation}\label{eq:sec5con1}
 \lim\limits_{n\to\infty}\sup\limits_{x\in D}|\bar{z}^n(x)-\bar{z}(x)|=0.
\end{equation}
Introduce the continuous random field $\bar{v}^n(x)$ by setting $\bar{v}^n(y_k)=\bar{V}^n_k$ for $1\leq k\leq (n-1)^d$ and for $x\in D$ by linear interpolation from the values $\bar{v}^n$ on $D_n$ and $\bar{v}^n|_{\partial D_n}=0$.
 Since $E(\|u\|_\infty^2)<\infty$ (by \cite[Theorem 4.1]{WEN}) and $|\varphi^n_\alpha(x)-\varphi_\alpha(x)|\leq 2^d|\alpha|/n$ (by \cite[Lemma 3.2]{GYO}), it is clear that for any $p\geq 1$,
\begin{equation}\label{eq:sec5con2}
 \lim\limits_{n\to\infty}E[\sup\limits_{x\in D}|\bar{v}^n(x)-\bar{v}(x)|^p]=0.
\end{equation}

In fact, in view of \cite[Theorem 2.2]{ZHA1}, to prove \eqref{eq:sec5con2}, it is sufficient to prove the following two assertions: there exist $p\geq 1$ and $C>0$, such that for any $x,y\in D$ we have

\vspace{2mm}
 (i) $E[|\bar{v}^n(x)-\bar{v}^n(y)|^p+|\bar{v}(x)-\bar{v}(y)|^p]\leq C|x-y|^{2p\gamma(d,\epsilon)};$

\vspace{2mm}
 (ii) $\lim\limits_{n\to\infty}E[|\bar{v}^n(x)-\bar{v}(x)|^p]=0.$

\vspace{2mm}
 In view of formula \eqref{eq:Klipschitz}, Assumptions \ref{ass:a1} and \ref{ass:a2}, for any $x,z\in D$, we have
 \begin{eqnarray*}
  &&E[|\bar{v}(x)-\bar{v}(z)|^p]\\
  &\leq& 2^{p-1}(1+c_p)C(L_1,L_2)E[(1+\|u\|^2_\infty)^{p\over 2}](\int_D |K(x,y)-K(z,y)|^2dy)^{p\over 2}\\
  &\leq& 2^{p-1}(1+c_p)C(L_1,L_2)E[(1+\|u\|^2_\infty)^{p\over 2}]B^{p\over 2}|x-z|^{2p\gamma(d,\epsilon)}.
 \end{eqnarray*}
 Replace $\bar{v}$ in the above estimate by $\bar{v}^n$, the assertion still holds. The assertion (i) is proved.

 On the other hand, by \cite[Lemma 3.2]{GYO}, we know
\begin{equation}\label{eq:K'Kdifference}
 \sup\limits_{x\in D}\int_D(K'(x,y)-K(x,y))^2dy\leq C(\frac{1}{n})^{4\gamma(d,\epsilon)}.
\end{equation}
 Hence, we have
 \begin{equation*}\begin{split}
 &E[|\bar{v}^n(x)-\bar{v}(x)|^p]\\
 \leq&\, C_p E\left[\Big(\int_D|K'(x,y)-K(x,y)|^2f^2(y,u(y))dy\Big)^{\frac{p}{2}}\right]\\
 &+c_p E\left[\Big(\int_D|K'(x,y)-K(x,y)|^2\sigma^2(y,u(y))dy\Big)^{\frac{p}{2}}\right]\\
 \leq&\, C(p, L_1, L_2)(\frac{1}{n})^{2p\gamma(d,\epsilon)} [1+E(\|u\|^p_\infty)]\rightarrow0,\quad \mbox{as}\ n\rightarrow\infty.
\end{split}\end{equation*}
The assertion (ii) is proved.

Set $\bar{u}^n(x):=\bar{v}^n(x)+\bar{z}^n(x)$. Since $u(x)=\bar{v}(x)+\bar{z}(x)$ by definition, it follows from (\ref{eq:sec5con1}) and (\ref{eq:sec5con2}) that
\begin{equation}\label{eq:sec5con3}
 \lim\limits_{n\to\infty}E[\sup\limits_{x\in D}|\bar{u}^n(x)-u(x)|^p]=0.
\end{equation}
Hence, to prove the theorem, it is sufficient to show that
\begin{equation}\label{eq:baru_tildeu}
 \lim\limits_{n\to\infty}E[\sup\limits_{x\in D}|\bar{u}^n(x)-\tilde u^n(x)|^p]=0.
\end{equation}
Applying Lemma \ref{le:thm1} to (\ref{eq:sec5ell1}) and (\ref{eq:sec5ell2}), it follows
\begin{equation}\label{eq:sec5in1}\begin{split}
 \sup\limits_{x\in D}|\bar{u}^n(x)-\tilde u^n(x)|
=&\sup\limits_{0\leq k\leq (n-1)^d}|\bar{V}^n_k-V^n_k+\bar{Z}^n_k-Z_k^n|\\
\leq&\, 2\sup\limits_{0\leq k\leq (n-1)^d}|\bar{V}^n_k-V^n_k|
\leq 2\sup\limits_{x\in D}|\bar{v}^n(x)-v^n(x)|.
\end{split}\end{equation}
Set
\begin{equation*}
\hat{v}^n(x):=\int_D K^n(x,y)f(\underline{k}_n(y),\bar{u}^n(\underline{k}_n(y)))dy+\int_D K^n(x,y)\sigma(\underline{k}_n(y),\bar{u}^n(\underline{k}_n(y)))W(dy).
\end{equation*}

To estimate the difference $\bar{v}^n-v^n$, it is sufficient to estimate the differences $\bar{v}^n-\hat{v}^n$ and $\hat{v}^n-v^n$. Recalling the expression of $v^n$ in \eqref{eq:v^n1}, we have
\begin{equation*}\begin{split}
 \hat{v}^n(x)-v^n(x)
=&\int_D K^n(x,y)[f(\underline{k}_n(y),\bar{u}^n(\underline{k}_n(y)))-f(\underline{k}_n(y),u^n(\underline{k}_n(y)))]dy\\
&+\int_D K^n(x,y)[\sigma(\underline{k}_n(y),\bar{u}^n(\underline{k}_n(y)))-\sigma(\underline{k}_n(y),u^n(\underline{k}_n(y)))]W(dy).
\end{split}\end{equation*}

In view of formula \eqref{eq:Kestimate}and Lemma \ref{le:kolmogorov}, we have
\begin{equation}\label{eq:sec5in2}\begin{split}
 &E\left[\sup\limits_{x\in D}|\hat{v}^n(x)-v^n(x)|^p\right]\\
\leq&\,2^{p-1}E\left[\sup\limits_{x\in D}\left|\int_D K^n(x,y)(f(\bar{u}^n(\underline{k}_n(y)))-f(u^n(\underline{k}_n(y))))dy\right|^p\right]\\
&+2^{p-1}E\left[\sup\limits_{x\in D}\left|\int_D K^n(x,y)(\sigma(\bar{u}^n(\underline{k}_n(y)))-\sigma(u^n(\underline{k}_n(y))))W(dy)\right|^p\right]\\
\leq&\,[2^{p-1}L_1^p(\tilde C_D)^{\frac{p}{2}}+2^{2p-2}c_pL_1^p(aB^{p\over 2}+\tilde C_D^{p\over 2})]E\left[\sup\limits_{x\in D}|\bar{u}^n(x)-u^n(x)|^p\right].
\end{split}\end{equation}

Denote $\tilde C':=2^{3p-2}L_1^p(\tilde C_D)^{\frac{p}{2}}+2^{4p-3}c_pL_1^p(aB^{p\over 2}+\tilde C_D^{p\over 2})<1.$
Combining formulas \eqref{eq:sec5in1} and \eqref{eq:sec5in2}, we have
\begin{equation*}\begin{split}
&E\left[\sup\limits_{x\in D}|\bar{u}^n(x)-u^n(x)|^p\right]\\
\leq&\, 2^pE\left[\sup\limits_{x\in D}|\bar{v}^n(x)-v^n(x)|^p\right]\\
\leq&\, 2^{2p-1}E\left[\sup\limits_{x\in D}|\bar{v}^n(x)-\hat{v}^n(x)|^p\right]+2^{2p-1}E\left[\sup\limits_{x\in D}|\hat{v}^n(x)-v^n(x)|^p\right]\\
\leq&\, 2^{2p-1}E\left[\sup\limits_{x\in D}|\bar{v}^n(x)-\hat{v}^n(x)|^p\right]+\tilde C'E\left[\sup\limits_{x\in D}|\bar{u}^n(x)-u^n(x)|^p\right].
\end{split}\end{equation*}

Hence there exists a constant $c''$ such that
\begin{equation*}
 E[\sup\limits_{x\in D}|\bar{u}^n(x)-u^n(x)|^p]\leq c''E[\sup\limits_{x\in D}|\bar{v}^n(x)-\hat{v}^n(x)|^p].
\end{equation*}

Finally, it remains to estimate the difference $\bar{v}^n-\hat{v}^n$. By the integral formulations of $\hat{v}^n$ and $\bar{v}^n$, we have
\begin{equation}\label{eq:sec5re1}\begin{split}
  \hat{v}^n(x)-\bar{v}^n(x)
 =&\int_D(K^n(x,y)-K'(x,y))f(\underline{k}_n(y),\bar{u}^n(\underline{k}_n(y)))dy\\
&+\int_D(K^n(x,y)-K'(x,y))\sigma(\underline{k}_n(y),\bar{u}^n(\underline{k}_n(y)))W(dy)\\
&+\int_DK'(x,y)(f(\underline{k}_n(y),\bar{u}^n(\underline{k}_n(y)))-f(y,u(y)))dy\\
&+\int_DK'(x,y)(\sigma(\underline{k}_n(y),\bar{u}^n(\underline{k}_n(y)))-\sigma(y,u(y)))W(dy)\\
:=& B_1^n(x)+B_2^n(x)+B_3^n(x)+B_4^n(x).
\end{split}\end{equation}
We will show that each of the four terms tends to zero.

In view of \eqref{eq:K'Kdifference}, we have
\begin{equation*}
 \sup\limits_{x\in D}\int_D(K(x,y)-K'(x,y))^2dy\to 0, \quad \text{as}\ n\to\infty.
\end{equation*}
In view of \cite[Lemma 3.5]{GYO}, for every $\epsilon>0$, there exists a constant $C(d,\epsilon)$, such that for $x\in D$ and $n\in\mathbb{N}^*$,
\begin{equation}\label{eq:KKndifference}
 \int_D|K(x,y)-K^n(x,y)|^2dy\leq C(d,\epsilon)n^{-2\sigma(d,\epsilon)},
\end{equation}
where
\begin{equation*}
 \sigma(d,\epsilon)=\begin{cases}
  1-\epsilon,\quad& \text{if}\ d=1,\\
  \frac{1}{2}-\epsilon,\quad& \text{if}\ d=2,\\
  \frac{1}{5}-\epsilon,\quad& \text{if}\ d=3.
 \end{cases}
\end{equation*}
We also need the following estimates on $u$ and $\bar{u}^n$: in view of Lemma \ref{le:thm1} we have
\begin{equation*}
  E[\sup\limits_{x\in D}|\bar{u}^n(x)|^p]\leq E[\sup\limits_{x\in D}|\bar{v}^n(x)|^p+\sup\limits_{x\in D}|\bar{z}^n(x)|^p]\leq 2E[\sup\limits_{x\in D}|\bar{v}^n(x)|^p];
\end{equation*}
in view of Lemma \ref{le:kolmogorov}, formulas \eqref{eq:Kestimate} and \eqref{eq:Klipschitz}, there exists a constant $C$ such that
\begin{equation*}\begin{split}
  E[\sup\limits_{x\in D}|\bar{v}^n(x)|^p]
 \leq&\, CE[\sup\limits_{x\in D}|\int_DK'(x,y)f(y,u(y))dy|^p]\\
 &+CE[\sup\limits_{x\in D}|\int_DK'(x,y)\sigma(y,u(y))W(dy)|^p]\\
 \leq&\, C E\left[\sup\limits_{x\in D}(1+|u(x)|^2)^{\frac{p}{2}}\right]<\infty.
\end{split}\end{equation*}

Then, in view of formulas \eqref{eq:K'Kdifference} and \eqref{eq:KKndifference}, we have
\begin{equation}\label{eq:sec5b1}\begin{split}
 E\sup\limits_{x\in D}|B_1^n(x)|^p
&\leq E[\int_D(\sup\limits_{x\in D}|K^n(x,y)-K'(x,y)|)^2dy]^{\frac{p}{2}}\cdot[\int_D|f(\underline{k}_n(y),\bar{u}^n(\underline{k}_n(y)))|^2dy]^{\frac{p}{2}}\\
&\leq [\int_D(\sup\limits_{x\in D}|K^n(x,y)-K'(x,y)|)^2dy]^{\frac{p}{2}}\cdot[\int_D 1+E[|\bar{u}^n(\underline{k}_n(y))|^2]dy]^{\frac{p}{2}}\\
&\leq C\sup\limits_{x\in D} [\int_D|K^n(x,y)-K'(x,y)|^2dy]^{\frac{p}{2}}
\to 0, \quad  \text{as}\ n\to\infty.
\end{split}\end{equation}
Similar to the proof of formula \eqref{eq:sec5con2}, it is sufficient to prove $E[|B_2^n(x)|^p]\to 0,\, as\ n\to\infty$. In fact
\begin{equation}\label{eq:sec5b2}\begin{split}
&E[|B_2^n(x)|^p]\\
&\leq c_pE\left[\left(\int_D|K^n(x,y)-K'(x,y)|^2|\sigma(\underline{k}_n(y),\bar{u}^n(\underline{k}_n(y)))|^2dy\right)^{\frac{p}{2}}\right]\\
&\leq C E\left[\left(\int_D|K^n(x,y)-K'(x,y)|^2(1+|\bar{u}^n(\underline{k}_n(y))|^2)dy\right)^{\frac{p}{2}}\right]\\
&\leq C \left[1+E\left(\sup\limits_{y\in D}|\bar{u}^n(\underline{k}_n(y))|^p\right)\right]\cdot\left[\int_D|K^n(x,y)-K'(x,y)|^2dy\right]^{\frac{p}{2}}\\
&\to\ 0, \quad  \text{as}\ n\to\infty.
\end{split}\end{equation}
Similarly, we also have
\begin{equation}\label{eq:sec5b3}
 \lim\limits_{n\to\infty} E(|B_3^n(x)|^p+|B_4^n(x)|^p)=0.
\end{equation}
Putting formulas \eqref{eq:sec5b1}, \eqref{eq:sec5b2} and \eqref{eq:sec5b3} together, we obtain
\begin{equation}\label{eq:finalestin}
  \lim\limits_{n\to\infty}E\left[\sup\limits_{x\in D} |\hat{v}^n(x)-\bar{v}^n(x)|^p \right]=0.
\end{equation}
Finally we complete the proof of formula \eqref{eq:baru_tildeu} and hence the theorem.
\end{proof}

\section{Acknowledge}
The first author is sincerely grateful to his PhD advisor, Professor Shanjian Tang, for giving many useful suggestions and comments on the improvements. The first author also would like to thank Professor Ralf Kornhuber for inspiring discussions on numerical PDE theory.


\end{document}